\documentclass[a4paper,12pt]{article}

\usepackage{a4,amssymb,amsmath,amsthm,url,xcolor,enumerate,float,lineno}
\usepackage{bezier,amsfonts,amssymb,graphicx,amsthm,url}
\usepackage[utf8]{inputenc}
\usepackage{subcaption}
%%%%%%%%%%%%%%%
\usepackage{amsmath}
\usepackage{amsfonts}
\usepackage{amssymb,amsthm}
\usepackage{url}
%%%%%%%%%%%%%%%
\usepackage{xcolor}
\usepackage{tikzit}
%%%%%%%%%%%%%%%
%\newtheorem{Lemma}{Lemma}[section]
%\newtheorem{Definition}{Definition}[section]
%\newtheorem{Theorem}{Theorem}[section]
%\newtheorem{Example}{Example}[section]
%\newtheorem{Observation}{Observation}[section]
%\newtheorem{Conjecture}{Conjecture}[section]
%\newtheorem{Corollary}{Corollary}[section]

\newtheorem{theorem}{Theorem}[section]
\newtheorem{definition}{Definition}[section]
\newtheorem{proposition}[theorem]{Proposition}
\newtheorem{corollary}[theorem]{Corollary}
\newtheorem{lemma}[theorem]{Lemma}
\newtheorem*{theorem*}{Theorem}
\newtheorem*{problem*}{Problem}

\def\kaxxa{{\vcenter {\hrule height .2mm
\hbox{\vrule width .2mm height 2mm \kern 2mm
\vrule width .2mm} \hrule height .2mm}}}

\newcommand{\cay}[2]{\mathsf{Cay}\left(#1 ; #2\right)}

% TiKZ style file generated by TikZiT. You may edit this file manually,
% but some things (e.g. comments) may be overwritten. To be readable in
% TikZiT, the only non-comment lines must be of the form:
% \tikzstyle{NAME}=[PROPERTY LIST]

% Node styles
\tikzstyle{vertex}=[fill=black, draw=black, shape=circle, thick, scale=0.75]

% Edge styles
\tikzstyle{edge}=[-, fill={rgb,255: red,0; green,128; blue,128}]
\tikzstyle{dir_edge}=[->]
\tikzstyle{blue_edge}=[-, draw=blue, ultra thick]
\tikzstyle{red_edge}=[-, draw=red, ultra thick]

\tikzset{every picture/.style={font issue=\footnotesize},
         font issue/.style={execute at begin picture={#1\selectfont}}
        }

\title{Flip colouring of graphs}
\author{\begin{tabular}{cc}Yair Caro & Josef Lauri \\ University of Haifa-Oranim & University of Malta\\\\ Xandru Mifsud & Raphael Yuster \\ University of Malta & University of Haifa \end{tabular}}

\date{Christina Zarb \\ University of Malta}

\DeclareGraphicsExtensions{.pdf,.png,.jpg}

\begin{document}

\maketitle

\begin{abstract}
It is proved that for integers $b, r$ such that $3 \leq  b < r \leq \binom{b+1}{2} - 1$, there exists a red/blue edge-colored graph such that the red degree of every vertex is $r$,  the blue degree of every vertex is $b$, yet in the closed neighborhood of every vertex there are more blue edges than red edges.
The upper bound $r \le \binom{b+1}{2}-1$ is best possible for any $b \ge 3$.
We further extend this theorem to more than two colours, and to larger neighbourhoods.

A useful result required in some of our proofs, of independent interest, is that for integers $r,t$ such that $0 \leq t \le  \frac{r^2}{2}  -  5r^{3/2}$, there exists an $r$-regular graph in which each open neighborhood induces precisely $t$ edges.

Several explicit constructions are introduced and relationships with constant linked graphs, $(r,b)$-regular graphs and vertex transitive graphs are revealed.
\end{abstract}
\section{Introduction}

Local versus global phenomena are widely considered both in graph theory (combinatorics in general) and in social sciences \cite{ABDULLAH20151,caro2018effect,chebotarev2023power,FISHBURN1986165,lerman2016majority,LISONEK1995153}.  %Interesting phenomena are those in which there is  either a counter-intuitive connection between "local property that  holds globally "  and a "global property", or when  there is a direct structural connection between "local property that holds globally"   and a "global property".  
Such phenomena occur in the most elementary graph theory observations as well as in highly involved theorems and conjectures.

A simple example dates back to Euler: Every degree is even (a local property), if and only if each component has an Eulerian circuit (a global property). More involved examples are Tur\'an-type problems \cite{furedi2013history,turan1941external,turan1954theory}, broadly showing that if a graph does not contain some fixed graph (local property), then its number of edges cannot be too large (a global property).
As an illustration of a somewhat counter-intuitive example, we recall a famous theorem of Erd\H{o}s \cite{erdos1959graph} stating that for any $h,k \ge 3$, there is a graph whose shortest cycle has length at least $h$ (a locally verifiable property), yet its chromatic number is at least $k$ (a global property).
Even more challenging is the Erd\H{o}s-Hajnal conjecture \cite{hajnal1989ramsey} stating that for every graph $H$, there is a constant $\lambda_H > 0$ such that if $G$ is an $n$-vertex graph not containing an induced copy of $H$, then $G$ contains a homogeneous set of order $\Omega(n^{\lambda_H})$.
Indeed one can find as many illustrious examples as one wishes.

We now introduce an ``umbrella'' for the local-global problem considered in this paper. Recall that $N^G(v)$ denotes the set of neighbors of a vertex $v$ in $G$
(its open neighborhood) and $N^G[V]= N^G(v) \cup \{v\}$ denotes the closed neighborhood. We omit the superscript when the graph is clear from context. Some further useful notation follows.
\begin{itemize}
\item{For a colouring $f : E(G) \rightarrow  \{ 1,\ldots,k \}$, $k\geq 1$, let $E(j) =  \{ e \in  E(G) : f( e )  = j  \}$ and $e_j(G) = |E(j)|$. For $k = 1$ we use $e(G)$.}
\item{For a vertex $v$, $\deg_j(v)  =  | \{ e :  e \mbox{  incident with $v$ and $f(e )  = j$}  \}|$.  For $k = 1$ we use $\deg(v)$.}
\item{For a vertex $v$, $e_ j[v]   =  | E(j) \cap  E(N[v]) |$.  For $k = 1$ we use $e[v]$.} 
\item{For a vertex $v$, $e_ j(v)   =  | E(j) \cap  E(N(v)) |$.  For $k = 1$ we use $e(v)$.}
\end{itemize}

We now state the general flip colouring problem: Given a graph $G$  and an integer $k \geq 2$, does there exist a colouring $f : E(G) \rightarrow \{ 1,\ldots,k \}$ such that:
\begin{itemize}
\item{for  every vertex $v$, $\deg_j(v) > \deg_i(v)$  for $1 \leq i< j \leq k$  (in particular, forcing  global majority $e_j(G)  > e_i(G)$  for $1 \leq i< j \leq k$)}\,;
 \item{for every vertex $v$, $ e_j[v] < e_i[v]$  for $1 \leq i< j \leq k$  (forcing an opposite local majority)\,.}  
\end{itemize}
 If such an edge-colouring exists, then $G$  is said to be a {\em $k$-flip graph} and the colouring $f$
 is a {\em $k$-flip colouring}.

We shall mostly deal with a version of this problem restricted to regular edge-coloured subgraphs (hence regular graphs), as this question already captures the essence of the problem and reduces notation overload. Namely,
given $k \geq 2$, a $d$-regular graph $G$ and a strictly increasing positive integer sequence $(a_1, \dots, a_k)$ such that $d = \sum_{j=1}^k a_j$, does there exist a colouring $ f : E(G) \rightarrow  \{ 1,\ldots,k \}$ such that:
\begin{itemize}
\item{$E(j)$ spans an $a_j$-regular subgraph, i.e, $\deg_j(v) = a_j$ for every $v\in V(G)$\,;}
\item{for every vertex $ v \in  V(G)$,  $e_k[v] < e_{k-1}[v] < \cdots <  e_1[v]$.}
\end{itemize}    
 
If such an edge-colouring exists then $G$ is said to be an {\em $(a_1,\ldots,a_k)$-flip graph} (in particular, a $k$-flip graph) and $(a_1, \dots, a_k)$ is called a {\em flip sequence} of $G$.
An illustrative example is given in Figure \ref{flip_4_3_smallest}.

\begin{figure}[ht!]
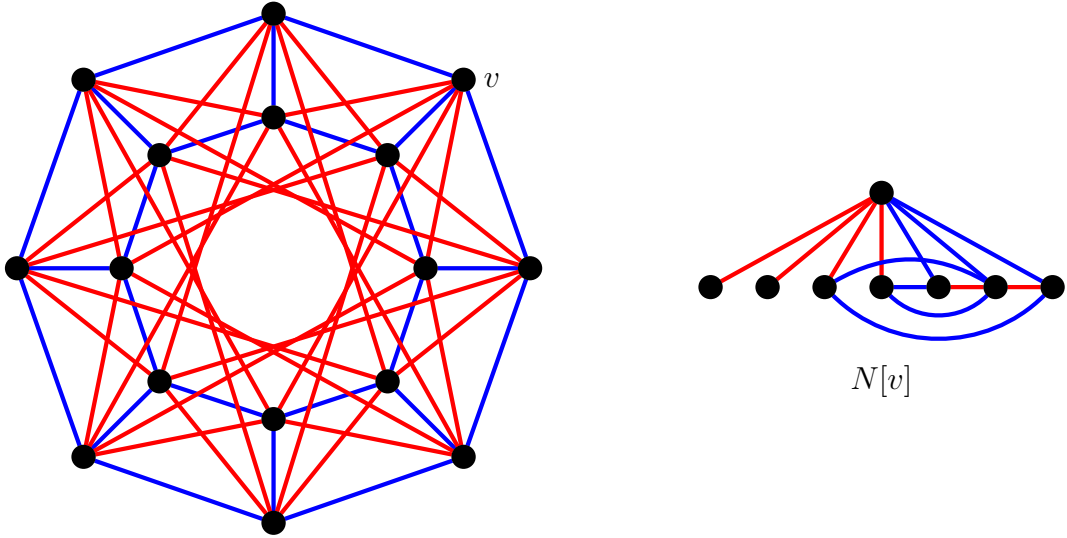

	\centering
	\ctikzfig{figures/flip_4_3_smallest}
	\caption{Smallest known $(3, 4)$-flip graph having 16 vertices, with the subgraph induced by the closed neighbourhood of any vertex $v$ illustrated on the right.}
	\label{flip_4_3_smallest}
\end{figure}

\subsection{Major problems concerning flip graphs}
 
The notion of flip graph gives rise to several natural problems:
\begin{enumerate}
	
\item
\emph{Characterise flip sequences:}	Given a strictly increasing positive integer sequence $(a_1, \dots, a_k)$, is it a flip sequence of some graph?
	
\item
\emph{Smallest order of an $(a_1,\ldots,a_k)$-flip graph:} Given a flip sequence $(a_1, \dots, a_k)$, determine the smallest order of a graph realising it.

\item
Devise explicit constructions for flip graphs with two or more colours.

\item
\emph{Interval flip.} Find $k$-flip sequences of the form $(1+t,2+t,\ldots,k+t)$.
	
\item
Extend the notion of flip from counting colour occurrences in closed neighbourhoods to counting colour occurrences in $t$-closed neighbourhoods (a vertex is in the $t$-closed neighbourhood of $v$ if its distance from $v$ is at most~$t$).

\item
\emph{Complexity of recognising a $k$-flip graph.}
Given a (possibly regular) graph $G$ and an integer $ k \geq 2$, determine whether it is a $k$-flip graph. The problem is clearly in NP, but is it NP-complete? 

\end{enumerate}
We shall consider most of these questions in the sequel. The paper is organised as follows.

Section 2 introduces the coloured Cartesian product technique which is used several times along this paper, together with a preliminary application using the family of $(r,c)$-constant graphs, which are $r$-regular graphs such that for every vertex $v$, the induced graph on $N(v)$ contains exactly $c$ edges.

Section 3 is about the flip problem with two colours. We develop techniques for constructing $(b,r)$-flip graphs using Cartesian products. We prove that a necessary and sufficient condition for $(b,r)$ to be a flip sequence is $3 \leq b < r \leq \binom{b+1}{2} -1$. This theorem completely answers Problems 1, 4 above for flip sequences of length two, supplies constructions as requested in Problem 3, and gives an upper bound for Problem 2.

Section 4 concerns the case of three or more colours. In particular, we prove that if $(a_1, a_2, a_3)$ is a flip sequence, then $a_3 \leq 2(a_1)^2$. We provide a construction that comes close to this bound.
Interestingly, it is revealed that when at least four colours are used, there are $k$-flip sequences where $a_k$ can be arbitrarily large even if $a_1$ is fixed.
 
Section 5 concerns $(r,c)$-constant graphs and their applications to Problem 4 (interval flip).
We prove the second theorem mentioned in the abstract and use it to prove that the interval $[b,\ldots, \frac{b^2}{4}  -  \frac{5b^{3/2}}{2}]$ is a flip sequence. We also propose a simple construction showing that for $b \geq 3$,  the interval $[b,\ldots,2b -2]$ is a flip sequence, which is useful with regards to the above result for small values of $b$.   

Section 6 concerns Problem 5. We prove several results concerning the extension of flip graphs to larger neighbourhoods.
 
Finally, section 7 summarises the current work and offers further open problems.
\section{The coloured Cartesian product technique}

\subsection{Cartesian products of edge-coloured graphs}

The Cartesian product of graphs will be useful in the construction of flip-graphs. Due to the additive nature of the degree and closed-neighbourhood sizes under the Cartesian product, this allows us to consider its factors independently. Before doing so, we recall the definition of Cartesian product and outline a number of its properties, including edge-colouring inheritance. 

\begin{definition}[Cartesian product] The Cartesian product $G \ \square \ H$ of the graphs $G$ and $H$ is the graph such that $V\left(G \ \square \ H\right) = V(G) \times V(H)$ and there is an edge $\{uv, u^\prime v^\prime\}$ in $G \ \square \ H$ if and only if either $u = u^\prime$ and $vv^\prime \in E(H)$, or $v = v^\prime$ and $uu^\prime \in E(G)$.
\end{definition}

The Cartesian product of graphs is commutative and associative, so the Cartesian product of a finite set of graphs is well-defined. It also enjoys a number of additional properties.
In particular, it is vertex-transitive if and only if each of its factors is vertex-transitive. More so, with an appropriate choice of generating set, the Cartesian product of Cayley graphs is also a Cayley graph. 

Let $G_1,\ldots,G_r$ be edge-coloured graphs. We extend the edge-colourings of the $G_i$'s to an edge-colouring of their Cartesian product in the natural way: the colour of $e = \{u_1u_2\cdots u_r, u_1^\prime u_2^\prime \cdots u_r^\prime\}$ is the colour of the unique edge
$u_iu_i^\prime \in E(G_i)$ for which $u_i \neq u_i^\prime$.
This colouring inheritance is illustrated in Figure \ref{CPDiagram}, with its properties summarised in Lemma \ref{CPColourLemma}.

\begin{figure}[ht!]
\centering
\includegraphics[width=0.43\textwidth]{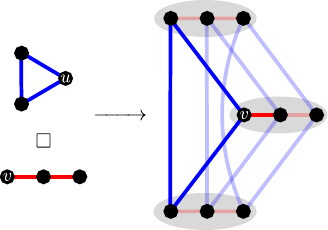}
\caption{Illustration of the edge-colouring inheritance in $K_3 \ \square \ P_3$ from its factors, where $K_3$ is coloured blue and $P_3$ is coloured red.}
\label{CPDiagram}
\end{figure}

\begin{lemma}\label{CPColourLemma}
	Let $G_1,\ldots,G_r$ be edge-coloured by colour set $[k] = \{1, \dots, k\}$.
	Let $G$ be their edge-coloured Cartesian product with the inherited colouring from its factors. Then
	for any $i \in [k]$ and $u_1\cdots u_r \in V(G)$:
	\begin{align*}
	\deg_i (u_1\cdots u_r) & = \sum_{j=1}^r \deg_i^{G_j}(u_j)\,,\\
	e_i\left[u_1\cdots u_r \right] & = \sum_{j=1}^r e_i^{G_j}[u_j]\,.
	\end{align*}
\end{lemma}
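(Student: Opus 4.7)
The plan is to prove both identities by analysing how edges incident to---or among neighbours of---a vertex $u = u_1 \cdots u_r$ of $G$ decompose according to which coordinate they modify. By definition of the Cartesian product every edge of $G$ changes exactly one coordinate, so I would partition the relevant edge sets by the coordinate index $j \in \{1,\ldots,r\}$, observe that the resulting classes are pairwise disjoint (immediate, since different $j$'s produce edges acting on different coordinates), and then establish a colour-preserving bijection between the colour-$i$ edges in the $j$-th class and the colour-$i$ edges in the relevant local structure of $G_j$ around $u_j$.

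For the degree identity, fix $j$. An edge at $u$ modifying coordinate $j$ has the form $\{u_1 \cdots u_r,\ u_1 \cdots u_j' \cdots u_r\}$ with $u_j' \in N^{G_j}(u_j)$, and the inherited colouring assigns to it precisely the colour of $u_j u_j'$ in $G_j$. Hence the colour-$i$ edges at $u$ modifying coordinate $j$ are in bijection with the colour-$i$ edges at $u_j$ in $G_j$, and summing over $j$ yields the first identity.

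The neighbourhood identity requires a short but decisive alignment claim: if $e = \{x,y\} \in E(N[u])$ modifies coordinate $j$, then $x_\ell = y_\ell = u_\ell$ for every $\ell \neq j$. I would prove this by contradiction. Suppose some $\ell \neq j$ satisfies $x_\ell \neq u_\ell$. Since $e$ changes only coordinate $j$ we have $y_\ell = x_\ell \neq u_\ell$, so $y$ disagrees with $u$ at coordinate $\ell$. Membership of $x$ in $N[u]$ forces $x$ to agree with $u$ in every coordinate other than $\ell$, in particular $x_j = u_j$; but then $y_j \neq x_j = u_j$, making $y$ disagree with $u$ in both $\ell$ and $j$, contradicting $y \in N[u]$. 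Once the alignment claim is in hand, $x_j, y_j \in N^{G_j}[u_j]$ with $x_j y_j \in E(G_j)$, and conversely each such edge of $G_j$ lifts uniquely to an edge of $E(N[u])$ modifying coordinate $j$ by copying the fixed coordinates of $u$ elsewhere.

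Colour preservation is automatic from the inheritance rule, and summing the colour-$i$ counts across the disjoint classes $j = 1, \ldots, r$ yields the second identity. The only real obstacle is the alignment claim above; everything else is routine bookkeeping driven by the definition of $G_1 \ \square \ \cdots \ \square \ G_r$ and its inherited colouring.
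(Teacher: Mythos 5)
Your proof is correct. The paper states Lemma~\ref{CPColourLemma} without proof, treating it as immediate from the definition of the Cartesian product, and your argument --- in particular the alignment claim showing that any edge of $E(N[u_1\cdots u_r])$ changing coordinate $j$ must agree with $u_1\cdots u_r$ in every other coordinate, so that the induced closed neighbourhood is exactly the vertex-amalgamation of the factors' closed neighbourhoods --- is precisely the routine bookkeeping the paper leaves to the reader.
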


\subsection{$(r,c)$-constant graphs and their sub-families}

We recall constant link graphs , $(r,b)$-regular graphs, and introduce $(r,c)$-constant graphs.
{\em Constant link} graphs are those for which all subgraphs induced by open neighbourhoods are isomorphic
to some fixed graph $H$ (the link). Problems concerning which graphs $H$ can be links is an old (mostly unsolved) problem stated first by Zikov in 1964, which received attention over the years.  For some references see  \cite{blass1980trees,  conder2021parameters, https://doi.org/10.1002/jgt.3190090313, larrion2011small}.

Graphs are called {\em $(r ,b)$-regular} if they are $r$-regular and all open neighbourhoods induce a $b$-regular graph (hence $e[v] = r+\frac{br}{2}$). For a recent article and further references see \cite{conder2021parameters}.  

Generalising these two families, we define {\em $(r,c)$-constant graphs} to be $r$-regular graphs in which for every vertex $v$,  $e(v) = c$, hence $e[v] = r +c$.  

Figure \ref{RC_Const_Hierarchy} illustrates the hierarchy of several families of graphs in relation to $(r, c)$-constant graphs.

\begin{figure}[ht!]
\centering
\includegraphics[width=0.525\textwidth]{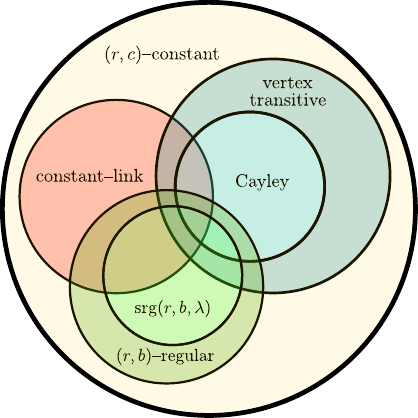}
\caption{Hierarchy of $(r, c)$-constant graphs and their sub-families of interest.}
\label{RC_Const_Hierarchy}
\end{figure} 

\subsection{Coloured Cartesian product lemmas}

This subsection explores the useful tool of coloured Cartesian products (abbreviated as CCP) via two lemmas. 
The first Lemma \ref{CCPL1} describes how to construct a $k$-flip graph from a family containing regular graphs satisfying prescribed conditions. The second Lemma \ref{CCPL2} describes how to construct $k$-flip graphs, $k$-flip sequences, $(r,c)$-constant graphs, $(r,b)$-regular and constant-link graphs, from such existing graphs, respectively. We also demonstrate some initial applications of these lemmas. 

\begin{lemma}[CCP Lemma I]\label{CCPL1}
Let $k \geq 2$ be an integer. Suppose $H_1, \dots, H_k$ are $a_1, \dots, a_k$ regular graphs respectively, with $a_i < a_j$ for $1 \leq i < j \leq k$. Furthermore, suppose that for $1 \leq i < k$,$$\max\limits_{u \in V(H_{i+1})} e [u] < \min\limits_{v \in V(H_i)} e [v]\,.$$ Then $G = \square_{j=1}^k H_j$ is a $(a_1, \dots, a_k)$-flip graph.
\end{lemma}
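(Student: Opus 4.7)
The plan is to color each factor $H_j$ entirely with colour $j$, take the inherited colouring on $G = \square_{j=1}^k H_j$, and then apply Lemma \ref{CPColourLemma} directly. Once the relevant quantities are rewritten, both the regularity requirement and the strict inequality chain for the closed neighbourhoods will fall out of the hypotheses.

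Here are the steps I would carry out. First, define the colouring: on each factor $H_j$, set $f_j(e) = j$ for every $e \in E(H_j)$, and let $f$ be the induced colouring on $G$ as in Lemma \ref{CPColourLemma}. Second, verify regularity in each colour class. Since $H_j$ is monochromatic in colour $j$, we have $\deg_i^{H_j}(u_j) = a_j \cdot [i = j]$. Lemma \ref{CPColourLemma} then gives, for every $u = u_1 \cdots u_k \in V(G)$,
\[
\deg_i(u) \;=\; \sum_{j=1}^k \deg_i^{H_j}(u_j) \;=\; \deg_i^{H_i}(u_i) \;=\; a_i,
\]
so $E(i)$ spans an $a_i$-regular subgraph for each $i \in [k]$.

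Third, analyse the closed-neighbourhood counts. Again by monochromaticity, $e_i^{H_j}[u_j] = 0$ whenever $j \neq i$, while $e_i^{H_i}[u_i] = e^{H_i}[u_i]$. Applying Lemma \ref{CPColourLemma} once more,
\[
e_i[u] \;=\; \sum_{j=1}^k e_i^{H_j}[u_j] \;=\; e^{H_i}[u_i].
\]
Fourth, chain the hypothesised inequalities. For each $1 \leq i < k$ and each vertex $u = u_1 \cdots u_k$,
\[
e_{i+1}[u] \;=\; e^{H_{i+1}}[u_{i+1}] \;\leq\; \max_{w \in V(H_{i+1})} e^{H_{i+1}}[w] \;<\; \min_{v \in V(H_i)} e^{H_i}[v] \;\leq\; e^{H_i}[u_i] \;=\; e_i[u],
\]
so $e_k[u] < e_{k-1}[u] < \cdots < e_1[u]$ holds at every vertex of $G$. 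Combined with the regularity verification, this shows $G$ is an $(a_1,\dots,a_k)$-flip graph with flip sequence $(a_1,\dots,a_k)$.

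There is no real obstacle; the only thing one must be careful about is the observation that in the inherited colouring each $H_j$ contributes only to colour $j$, so that the sums in Lemma \ref{CPColourLemma} collapse to single terms. After that, the strict $\max$--$\min$ separations in the hypothesis translate immediately into uniform strict inequalities between consecutive $e_i[u]$'s, which is exactly the flip condition.
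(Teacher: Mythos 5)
Your proof is correct and follows essentially the same route as the paper's: colour each factor $H_j$ monochromatically with colour $j$, invoke Lemma \ref{CPColourLemma} to collapse the colour-degree and closed-neighbourhood sums to the single contributing factor, and chain the $\max$--$\min$ hypotheses to get the strict inequalities. Your write-up just makes explicit the collapsing step that the paper treats as immediate.
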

\begin{proof}
Colour the edges of $H_j$ using colour $j$, for $1 \leq j \leq k$. Let $G = \square_{j=1}^k H_j$ be the corresponding CCP. Clearly then for every vertex $w$ in $G$, given any $1 \leq i < k$,  $\deg_i (w) = a_i < a_{i+1} = \deg_{i+1} (w)$ and $$e_{i+1} [w] \leq \max\limits_{u \in V(H_{i+1})} e [u] < \min\limits_{v \in V(H_i)} e [v] \leq e_i [w]$$ and consequently $G$ is an $(a_1, \dots, a_k)$-flip graph.
\end{proof}

\begin{lemma}[CCP Lemma II] \label{CCPL2}
	Let $k \geq 2$ be an integer.
	\begin{enumerate}[i.]
		\item If, for $1 \leq j \leq q$, $(a_{j,1}, \ldots, a_{j,k})$ are $k$-flip sequences then
		$(a_1,\ldots,a_k)$ is a $k$-flip sequence where $a_i = \sum_{j=1}^q a_{j,i}$.
		\item If $H_j$ for $1 \leq j \leq q$ are $(r_ j, c_j)$-constant graphs, then there exists a graph $G$ which is a $\left(\sum_{j=1}^q r_j, \sum_{j=1}^q c_j\right)$-constant graph.
		\item If $H_j$ for $1 \leq j \leq q$ are $(r_j, b)$-regular graphs, then there exists a graph $G$ which is a $\left(\sum_{j=1}^q r_j, b\right)$-regular graph.
		\item If, for $1 \leq j \leq q$, $G_j$ is a constant-link graph with link $H_j$, then $\square_{j=1}^q G_j$ is a constant-link graph with link $\cup_{j=1}^q H_j$.
	\end{enumerate}
\end{lemma}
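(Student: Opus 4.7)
The plan is to treat all four parts uniformly by taking $G$ to be the Cartesian product $\square_{j=1}^q H_j$ (respectively $\square_{j=1}^q G_j$ in part iv., and the CCP of flip-coloured realisers in part i.), and reducing each claim to Lemma \ref{CPColourLemma} together with a single structural observation about neighbourhoods in Cartesian products. That observation is: for a vertex $w = (w_1, \ldots, w_q) \in V(G)$, the open neighbourhood $N(w)$ partitions into sets $S_1, \ldots, S_q$, where $S_j$ consists of the neighbours obtained by modifying the $j$-th coordinate, and $S_j$ is isomorphic to $N^{H_j}(w_j)$ in the natural way; moreover, two neighbours in distinct parts $S_j, S_{j'}$ differ in two coordinates and so are non-adjacent in $G$, whereas two neighbours in the same part $S_j$ are adjacent in $G$ iff the corresponding vertices are adjacent in $H_j$. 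In particular, the subgraph induced by $N(w)$ in $G$ is the disjoint union of the subgraphs induced by $N^{H_j}(w_j)$ in $H_j$.

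For part i., let $G_j$ be a realiser of the flip sequence $(a_{j,1}, \ldots, a_{j,k})$ and take the CCP $G = \square_{j=1}^q G_j$ with inherited colouring. Lemma \ref{CPColourLemma} gives $\deg_i(w) = \sum_j \deg_i^{G_j}(w_j) = \sum_j a_{j,i} = a_i$ and $e_i[w] = \sum_j e_i^{G_j}[w_j]$ for each colour $i$. Since each summand satisfies $e_{i+1}^{G_j}[w_j] < e_i^{G_j}[w_j]$ strictly by assumption, summing yields $e_{i+1}[w] < e_i[w]$, so $G$ is an $(a_1, \ldots, a_k)$-flip graph.

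For parts ii.\ and iv., apply the structural observation directly: in $G = \square_{j=1}^q H_j$, every vertex $w$ has degree $\sum_j r_j$ (for ii.) and
\[
e(w) \;=\; \sum_{j=1}^q e^{H_j}(w_j) \;=\; \sum_{j=1}^q c_j,
\]
giving part ii.; and the induced subgraph on $N(w)$ is $\bigsqcup_j H_j$ by the observation, giving part iv. For part iii., fix $w$ and any neighbour $v \in S_j$; the neighbours of $v$ inside $N(w)$ are precisely the vertices of $S_j$ corresponding to neighbours of $v_j := $ the modified coordinate of $v$ inside $N^{H_j}(w_j)$, of which there are exactly $b$ by the $(r_j, b)$-regularity of $H_j$. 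Hence the induced subgraph on $N(w)$ is $b$-regular, and $G$ is $\bigl(\sum_j r_j, b\bigr)$-regular.

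There is no serious obstacle here: the only point requiring care is the verification that edges between different parts $S_j, S_{j'}$ are absent in the Cartesian product, which is immediate from the adjacency definition. Everything else is bookkeeping built on Lemma \ref{CPColourLemma} and this single observation.
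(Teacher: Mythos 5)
Your proof is correct and follows essentially the same route as the paper: the paper proves part (i) for $q=2$ via the Cartesian product and Lemma \ref{CPColourLemma}, and declares the remaining parts to "follow the same outline." Your explicit structural observation --- that $N(w)$ in $\square_{j=1}^q H_j$ induces the disjoint union of the subgraphs induced by the $N^{H_j}(w_j)$ --- is exactly the detail the paper leaves implicit for parts (ii)--(iv), so you have simply filled in the omitted bookkeeping rather than taken a different path.
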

\begin{proof}
	We prove (i) for the case $q=2$; the other claims and the extension to larger $q$ follow the same outline. Let $(a_1, \dots, a_k)$ and $(b_1, \dots, b_k)$ be $k$-flip sequences, so there exists a graph $G$ which is a $(a_1, \dots, a_k)$-flip graph, and a graph $H$ which is a $(b_1, \dots, b_k)$-flip graph. 
	
	Consider the CCP graph $F = G \ \square \ H$. The colour degrees of a vertex $v$ in $V(F)$ are $c_i = a_i + b_i$ for $1 \leq i \leq k$ and note that $c_i < c_j$ for $1 \leq i < j \leq k$. Also for $v=(x,y) \in V(F)$, we have that
	$$
	e_i [v] = e_i[x] + e_i[y] > e_j[x] + e_j[y] = e_j[v]
	$$
	for $1 \leq i < j \leq k$. Hence $F$ is a $(c_1, \dots, c_k)$-flip graph.
\end{proof}

Regular graphs with constant link are useful for flip graph construction, as if $G$ is $b$-regular with constant link $H$ where $e(H) = c$, then for any  $r$ such that $b < r < b +c$,  and for any triangle free $r$-regular graph $F$, the CCP graph $G \ \square \ F$, with $G$ coloured blue and $F$ coloured red, is an $(r+b)$-regular graph with $r$ red edges and $b$ blue edges incident with every vertex $v$ and yet in $N[v]$ there are just $r$ red edges but $b+c > r$ blue edges. Hence $G \ \square \ F$ is a $(b,r)$-flip graph.

Similarly, $(b,c)$-regular graphs are useful for flip graph construction, as if $G$ is a $(b,c)$-regular graph and $b < r < b+\frac{bc}{2}$  then for any triangle free $r$-regular graph $F$, the CCP graph $G \ \square \ F$, with $G$ coloured blue and $F$ coloured red, is an $(r+b)$-regular graph with $r$ red edges and $b$ blue edges  incident with every vertex $v$ and yet in $N[v]$ there are just $r$ red edges but  $ b +\frac{bc}{2}  > r$  blue edges. Hence $G \ \square \ F$ is a $(b,r)$-flip graph.

Clearly if $G$ is a $(b,c)$-constant regular graph and $b < r < r +c$, then, as above, for any $r$ such that $b < r <   b +c$,  and for any triangle free  $r$-regular graph $F$, the CCP graph $G \ \square \ F$ is a $(b,r)$-flip graph.

Therefore, we have an abundance of known $(b,c)$-regular graphs, Cayley graphs, vertex-transitive graphs and graphs with constant link as well as strongly regular graphs, which can be used to construct a panoramic collection of flip-graphs via the coloured Cartesian product.

\section{Existence of $(b,r)$-flip graphs}

This section settles the question: which pairs $(b,r)$ form a flip sequence?
We also establish an upper bound for $h(b,r)$, the smallest possible order of a $(b,r)$-flip graph.

\begin{theorem}\label{RBClassification}
	Let $r, b \in \mathbb{N}$. If $3 \leq b < r \leq \binom{b+1}{2} - 1$ then there exists a $(b,r)$-flip graph, and both the upper bound and lower bound are sharp.
\end{theorem}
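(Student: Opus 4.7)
The theorem has three parts: constructing a $(b,r)$-flip graph in the claimed range, sharpness of the upper bound $r \le \binom{b+1}{2}-1$, and sharpness of the lower bound $b \ge 3$.

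For the \emph{construction}, I would apply Lemma~\ref{CCPL1} with $k=2$, taking $H_1 = K_{b+1}$ coloured blue and $H_2 = K_{r,r}$ coloured red. Since $K_{b+1}$ is complete and $b$-regular, $N^{H_1}[v] = V(H_1)$ and $e^{H_1}[v] = \binom{b+1}{2}$ for every vertex; since $K_{r,r}$ is $r$-regular and triangle-free, $e^{H_2}[u] = r$ for every vertex. The lemma's hypothesis becomes $r < \binom{b+1}{2}$, which is exactly $r \le \binom{b+1}{2}-1$, and so the Cartesian product $K_{b+1}\ \square\ K_{r,r}$ on $2r(b+1)$ vertices is a $(b,r)$-flip graph, incidentally giving the upper bound $h(b,r)\le 2r(b+1)$ on the smallest order.

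For the \emph{upper-bound sharpness} I would argue by contradiction: suppose a $(b,r)$-flip graph exists with $r \ge \binom{b+1}{2}$. From $e_B[v] > e_R[v] \ge r$ we get $e_B[v] \ge r+1$ for every vertex. Decomposing $e_B[v] = b + x_v + y_v + z_v$ according to where the blue edges of $N(v)$ sit relative to the bipartition $N(v) = B_v \sqcup R_v$ (with $x_v, y_v, z_v$ counting blue edges inside $B_v$, between $B_v$ and $R_v$, and inside $R_v$ respectively), and using the trivial bound $x_v \le \binom{b}{2}$, we force $y_v + z_v \ge r + 1 - b - \binom{b}{2} \ge 1$ at every vertex. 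Classifying triangles by the number $j$ of blue edges they contain into counts $t_j$, this pointwise statement aggregates to $\sum_v(y_v + z_v) = t_1 + 2 t_2 \ge n$, while the flip condition itself aggregates to $\sum_v(e_B(v) - e_R(v)) = -3 t_0 - t_1 + t_2 + 3 t_3 \ge n(r-b+1)$. Combined with the extremal bound $t_3 \le \tfrac{n}{3}\binom{b}{2}$---saturated precisely when the blue graph is a disjoint union of $K_{b+1}$'s, which is exactly the equality case of the CCP construction---these inequalities should force a contradiction. The main technical obstacle is the final tightening: controlling $t_2$ from above via the $r$-regularity of the red graph together with the flip-forced upper bound on $\sum_v e_R(v) = 3 t_0 + 2 t_1 + t_2$, to pin down the boundary exactly at $r = \binom{b+1}{2}$.

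For the \emph{lower-bound sharpness} I would rule out $b = 1$ and $b = 2$ separately. When $b = 1$ the blue graph is a matching: within $N[v]$ the blue edges are the single edge $vu_0$ (where $B_v = \{u_0\}$) together with at most $\lfloor r/2 \rfloor$ matched pairs lying inside $R_v$, giving $e_B[v] \le 1 + \lfloor r/2 \rfloor \le r \le e_R[v]$ for every $r \ge 2$, which already violates the strict flip condition. When $b = 2$ the blue graph is a disjoint union of cycles; the same $(x_v, y_v, z_v)$ decomposition yields $e_B(v) \le r+1$, so $e_B[v] \le r+3$, while the flip condition $e_B(v) - e_R(v) \ge r-1$ forces $e_R(v) \le 2$ at every vertex. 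Feeding this into the triangle-class machinery of the previous paragraph with $\binom{b}{2} = 1$ (hence $t_3 \le n/3$) then yields a system of inequalities that admits no solution for $r \ge 3$, completing the sharpness of $b \ge 3$.
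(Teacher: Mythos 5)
Your construction is exactly the paper's: $K_{b+1}\ \square\ K_{r,r}$ with the complete factor blue and the bipartite factor red gives $e_B[v]=\binom{b+1}{2}>r=e_R[v]$, and that part is complete. The genuine gap is in the necessity direction, and you have located it yourself: your inequalities do not close. From $\sum_v\bigl(e_B(v)-e_R(v)\bigr)=-3t_0-t_1+t_2+3t_3\ge n(r-b+1)$ together with $3t_3\le n\binom{b}{2}$ alone, all you can deduce is $t_2\ge n\bigl(r-b+1-\binom{b}{2}\bigr)$, which for $r\ge\binom{b+1}{2}$ says only $t_2\ge n$ --- perfectly consistent, since the apex of each two-blue-edge triangle determines a pair of blue neighbours and hence $t_2\le n\binom{b}{2}$ is the only a priori ceiling. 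The repair route you propose (an upper bound on $t_2$ via red-regularity and a ``flip-forced upper bound on $\sum_v e_R(v)$'') does not close the argument either: the flip condition bounds $e_R(v)$ only in terms of $e_B(v)$, which is not itself usefully bounded above.

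The missing idea is to bound $t_2$ and $t_3$ \emph{jointly} rather than separately. For each vertex $v$, the triangles rooted at $v$ whose two edges at $v$ are both blue (types $BBB$ and $BBR$) are in bijection with edges of $G$ --- of either colour --- spanned by the $b$ blue neighbours of $v$, so $T_{BBB}(v)+T_{BBR}(v)\le\binom{b}{2}$ pointwise; summing gives $3t_3+t_2\le n\binom{b}{2}$. Since $-3t_0-t_1\le 0$, you then get $n(r-b+1)\le\sum_v\bigl(e_B(v)-e_R(v)\bigr)\le 3t_3+t_2\le n\binom{b}{2}$, i.e.\ $r\le\binom{b+1}{2}-1$. (The paper reaches the same conclusion via the double-counting identities between rooted triangle types in Lemma~\ref{rbLemma1}, but the content is this joint bound.) Note that this single inequality also disposes of your lower-bound cases: $b<r\le\binom{b+1}{2}-1$ already forces $b\ge 3$, so the separate $b=1$ analysis (which is correct) and the $b=2$ analysis (which again ends with an unverified ``system of inequalities that admits no solution'') are not needed.
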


We split the proof into two parts. We first prove that we must have $r < \binom{b+1}{2}$
(in particular, since $r > b$, this implies $b \ge 3$). Then we prove the rest of the theorem.
Before proceeding further, we need some notation describing edge-coloured triangles rooted at some vertex.

Let $B=blue=1$ and $R=red=2$. In a graph $G$ with edges coloured from $\{B, R\}$ and $X, Y, Z \in \{B,R\}$ a triangle rooted at a vertex $v$ is said to be of \textit{type} $XYZ$ \textit{at} $v$ if the two edges incident to $v$ are coloured $X$ and $Y$, and the third edge is coloured $Z$ (the types $BRR$ and $RBR$ are considered identical and the types $RBB$ and $BRB$ are also considered identical).
Hence, a triangle rooted at $v$ can have one of six possible types, illustrated in Figure \ref{TriangleTypes}.
\begin{figure}[h!]
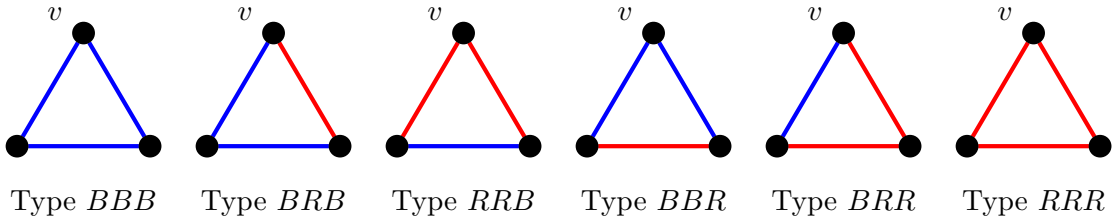

\small
\ctikzfig{figures/triangle_two_labelings}
\caption{All possible triangle types.}
\label{TriangleTypes}
\end{figure}

Let $T_{XYZ}(v)$ be the number of triangles of type $XYZ$ rooted at $v$. We need the following simple lemma.

\begin{lemma} \label{rbLemma1}
In a graph $G$ with edges coloured from $\{B, R\}$, we have that
	\begin{eqnarray*}
		2 \sum_{v \in V} T_{RRB}(v) = \sum_{v \in V} T_{BRR}(v) & \mathrm{and} & 2 \sum_{v \in V} T_{BBR}(v) = \sum_{v \in V} T_{BRB}(v)\;.
	\end{eqnarray*}
\end{lemma}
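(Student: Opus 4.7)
The plan is a direct double-counting argument for each identity, classifying the triangles of $G$ by their overall edge-colour multiset and then tracking, at each vertex of such a triangle, which rooted type it produces.

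For the first identity, I would let $\mathcal{T}_2$ denote the set of (unrooted) triangles of $G$ whose edge set consists of exactly two red edges and one blue edge. Neither side of the identity receives contributions from any other colour pattern: the presence of the type labels $RRB$ and $BRR$ forces both incident colours at $v$ to include at least one red, and the third edge to match the pattern. Now fix $t = xyz \in \mathcal{T}_2$ with blue edge $yz$. Then both triangle-edges at $x$ are red while the opposite edge $yz$ is blue, so $t$ contributes exactly once to $T_{RRB}(x)$ and $0$ to $T_{RRB}(y), T_{RRB}(z)$. At $y$ (symmetrically at $z$) one incident triangle-edge is blue and the other is red, and the opposite edge is red, so $t$ contributes once to $T_{BRR}(y)$ and once to $T_{BRR}(z)$, using the stated identification $BRR = RBR$ so that this is counted only once per vertex. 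Summing over $t \in \mathcal{T}_2$ gives $\sum_v T_{RRB}(v) = |\mathcal{T}_2|$ and $\sum_v T_{BRR}(v) = 2|\mathcal{T}_2|$, which yields the first identity.

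The second identity follows from the same argument with the roles of red and blue swapped: letting $\mathcal{T}_1$ denote the set of triangles with exactly one red edge and two blue edges, the unique vertex opposite the red edge contributes once to $T_{BBR}(\cdot)$, and the two endpoints of that red edge each contribute once to $T_{BRB}(\cdot)$ (invoking $BRB = RBB$), giving the same $1:2$ ratio.

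I do not expect any real obstacle here; the only delicate point is honouring the identifications $BRR = RBR$ and $RBB = BRB$ declared just before the lemma, since otherwise one might naively count each rooted type twice by listing both orderings of the incident edges at $v$. Once the partition of triangles into the colour-pattern classes $\mathcal{T}_1$ and $\mathcal{T}_2$ is in place, both identities reduce to the observation that a triangle with two edges of one colour and one edge of the other has a unique \emph{apex} vertex (opposite the minority edge) and two \emph{base} vertices (the endpoints of the minority edge), giving the $1:2$ split on the nose.
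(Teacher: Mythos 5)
Your proof is correct and is essentially the same double-counting argument as the paper's: classify each two-red/one-blue (resp.\ two-blue/one-red) triangle by its apex and base vertices, observe it is counted once as $RRB$ (resp.\ $BBR$) and twice as $BRR$ (resp.\ $BRB$), and sum. The extra care you take with the identifications $BRR=RBR$ and $BRB=RBB$ is sound but does not change the argument.
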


\begin{proof}
	Consider a triangle with two edges coloured $R$ and a single edge coloured $B$.
	Each such triangle is of $BRR$-type for two vertices and is of $RRB$-type for a single vertex.
	Summing over all such triangles yields the first equality. The second equality is symmetrical by considering triangles with two edges coloured $B$ and a single edge coloured $R$.
\end{proof}

\begin{proposition} \label{rb_upperBoundThm}
	In a $(b,r)$-sequence, we must have $r < \binom{b + 1}{2}$.
\end{proposition}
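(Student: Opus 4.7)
The plan is to sum the flip inequality $e_1[v] > e_2[v]$ over all $v \in V(G)$ and then apply Lemma \ref{rbLemma1}. Writing $n = |V(G)|$ and splitting each closed-neighbourhood edge count as $e_1[v] = b + e_1(v)$ and $e_2[v] = r + e_2(v)$ (using that $G$ is blue-$b$-regular and red-$r$-regular), the sum of the flip inequality reads
\begin{equation*}
\sum_{v\in V} e_1(v) - \sum_{v\in V} e_2(v) > n(r - b).
\end{equation*}

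For each vertex $v$, a blue edge inside $N(v)$ spans a triangle rooted at $v$ and contributes to exactly one of $T_{BBB}(v), T_{BRB}(v), T_{RRB}(v)$ according to the colours of the two edges at $v$; the analogous decomposition holds for $e_2(v)$ in terms of $T_{BBR}(v), T_{BRR}(v), T_{RRR}(v)$. Invoking both identities of Lemma \ref{rbLemma1}, the mixed-colour triangle counts combine so that $\sum_v T_{BRB}(v) = 2 \sum_v T_{BBR}(v)$ cancels all but one copy of $\sum_v T_{BBR}(v)$, and symmetrically $\sum_v T_{BRR}(v) = 2 \sum_v T_{RRB}(v)$ leaves only $-\sum_v T_{RRB}(v)$. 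After this simplification the summed inequality collapses to
\begin{equation*}
\sum_{v\in V} \bigl(T_{BBB}(v) + T_{BBR}(v)\bigr) - \sum_{v\in V} \bigl(T_{RRR}(v) + T_{RRB}(v)\bigr) > n(r - b).
\end{equation*}

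The key observation is that $T_{BBB}(v) + T_{BBR}(v)$ counts, for each pair of blue neighbours $u, w$ of $v$, whether $uw$ is an edge of either colour, i.e., it equals the number of edges in the subgraph induced on the set of blue neighbours of $v$. Writing $B(v)$ and $R(v)$ for the sets of blue and red neighbours of $v$, the inequality thus becomes $\sum_v e(B(v)) - \sum_v e(R(v)) > n(r - b)$. Using the trivial bounds $e(B(v)) \leq \binom{b}{2}$ (since $|B(v)| = b$) and $e(R(v)) \geq 0$, one obtains $n\binom{b}{2} > n(r - b)$, or equivalently $r < b + \binom{b}{2} = \binom{b+1}{2}$, as required.

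I do not foresee any substantial obstacle: the whole proof is essentially a one-shot averaging after installing the correct triangle bookkeeping. The only step requiring care is the coefficient matching that produces the cancellation of mixed-colour terms under Lemma \ref{rbLemma1}, but this is purely mechanical once the identifications $T_{BBB}(v) + T_{BBR}(v) = e(B(v))$ and $T_{RRR}(v) + T_{RRB}(v) = e(R(v))$ are set up correctly (respecting the paper's convention that rooted types like $BRR$ and $RBR$ are merged).
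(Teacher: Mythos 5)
Your proof is correct and follows essentially the same route as the paper's: sum the flip inequality over all vertices, decompose $e_B(v)$ and $e_R(v)$ into rooted triangle types, apply both identities of Lemma \ref{rbLemma1} to reduce to $\sum_v \bigl(T_{BBB}(v)+T_{BBR}(v)\bigr)$, and bound that by $n\binom{b}{2}$ via the edges induced on the $b$ blue neighbours. The only (immaterial) difference is that you keep the triangle bookkeeping as an exact identity and discard the nonnegative red terms at the end, whereas the paper drops $T_{RRR}$ and $\sum_v T_{RRB}$ as inequalities along the way.
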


\begin{proof}
	Suppose that $G=(V,E)$ is a $(b,r)$-flip graph equipped with a suitable colouring.
	Note that since $e_B[v] = e_B(v) + b$ and $e_R[v] = e_R(v) + r$, we have from the definition of a flip graph that $e_B(v)-e_R(v) > r-b$.
	For any vertex $v$, the number of edges coloured $B$ (respectively $R$) in the open neighbourhood is equal to the number of triangles rooted at $v$ which have an edge coloured $B$ (respectively $R$) not incident to $v$, so:
	\begin{align*}
	e_B(v) & = T_{BBB}(v) + T_{BRB}(v) + T_{RRB}(v)\,,\\
	e_R(v) & = T_{RRR}(v) + T_{BBR}(v) + T_{BRR}(v)\,.
	\end{align*}
Using these equalities and Lemma \ref{rbLemma1}, we obtain:
	\begin{align*}
		~ & \sum_{v \in V} e_B(v) - e_R(v)\\
		& = \sum_{v \in V} T_{BBB}(v) + T_{BRB}(v) + T_{RRB}(v) - T_{RRR}(v) - T_{BBR}(v) - T_{BRR}(v) \\
		&\leq \sum_{v \in V} T_{BBB}(v) + \left(\sum_{v \in V} T_{BRB}(v) - T_{BBR}(v)\right) + \left(\sum_{v \in V} T_{RRB}(v) - T_{BRR}(v)\right) \\
		&= \sum_{v \in V} T_{BBB}(v) + \sum_{v \in V} T_{BBR}(v) - \sum_{v \in V} T_{RRB}(v) \\
		&\leq \sum_{v \in V} T_{BBB}(v) + T_{BBR}(v)
	\end{align*}
	where the next to last step follows from Lemma \ref{rbLemma1}.
	Observe that $T_{BBB}(v) + T_{BBR}(v)$ is the number of edges incident with two blue neighbours of $v$ hence is at most $\binom{b}{2}$, thus
	$$
	\sum_{v \in V} e_B(v) - e_R(v) \le |V|\binom{b}{2}\;.
	$$
	It follows that there exists a vertex $v \in V$ such that $e_B (v) - e_R (v) \leq \binom{b}{2}$.
	Recalling that $r-b < e_B (v) - e_R (v)$  we have that $r-b < \binom{b}{2}$, so $r < \binom{b+1}{2}$.
\end{proof}

\begin{proof}[Proof of Theorem \ref{RBClassification}]
That $3 \leq b < r \leq \binom{b+1}{2} - 1$ follows immediately from Proposition \ref{rb_upperBoundThm}.
We show that given such $r$ and $b$, a $(b,r)$-flip graph exists. 

Consider the CCP graph $G = K_{r,r} \ \square \ K_{b+1}$ where the edges of $K_{b+1}$ are coloured blue and the edges of $K_{r, r}$ are coloured red.
By Lemma \ref{CPColourLemma}, it follows that every vertex $v$ in $G$ has $\deg_B (v) = b$ and $\deg_R (v) = r$. Moreover, we have that $e_B [v] = \binom{b+1}{2}$ and $e_R [v] = r$.  Hence $G$ is a $(b, r)$-flip graph.
\end{proof}

\subsection{Upper bounds on $h(b, r)$}

As the graph $G$ in the proof of Theorem \ref{RBClassification} has $2r(b+1)$ vertices, it follows that $h(b, r) \le 2r(b+1)$. On the other hand, we have already seen in Figure \ref{flip_4_3_smallest} an example of a $(3,4)$-flip graph with $16$ vertices. Hence $h(3,4) \leq 16$, so the aforementioned bound is not tight.
The next result offers a more general construction, considerably improving the $2r(b+1)$ upper bound.

\begin{theorem} \label{upperbound_hrb}
Let $b, r \in \mathbb{N}$ such that $3 \le b < r \le \binom{b+1}{2}-1$. Then,
$$h(b, r) \leq \min \left\{2(r +x)(b +1 - x) \colon  x \in \mathbb{Z},~ 0 \leq x \leq b,~ x + \binom{b +1 - x}{2} > r\right\}\,.$$	
\end{theorem}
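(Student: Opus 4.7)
The plan is to generalise the construction in Theorem~\ref{RBClassification} by introducing a parameter $x$ that shifts some blue degree from a complete-graph factor into a bipartite factor. Given a feasible $x$ with $0 \leq x \leq b$ and $x + \binom{b+1-x}{2} > r$, I would take $G = K_{b+1-x} \ \square \ K_{r+x,\,r+x}$ with the following colouring: every edge of $K_{b+1-x}$ is coloured blue; and $K_{r+x,\,r+x}$ is decomposed as the edge-disjoint union of an $x$-regular blue spanning subgraph and an $r$-regular red spanning subgraph. The latter exists because $K_{r+x,\,r+x}$ is $(r+x)$-edge-colourable and hence splits into $r+x$ perfect matchings, of which $x$ may be taken blue and the remaining $r$ taken red. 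The order of $G$ is $(b+1-x)\cdot 2(r+x) = 2(r+x)(b+1-x)$, matching the expression in the statement.

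Next I would apply Lemma~\ref{CPColourLemma} to verify the colour degrees: every vertex of $G$ has blue degree $(b-x)+x = b$ and red degree $0+r = r$, as required. For the local counts, I would combine the same lemma with the observations that $K_{b+1-x}$ is complete and $K_{r+x,\,r+x}$ is triangle-free. For $v = (v_1, v_2) \in V(G)$, the blue edges in $N[v]$ decompose as $e_B[v] = \binom{b+1-x}{2} + x$, because every blue edge of the complete factor sits in the closed neighbourhood of $v_1$, while the only blue edges of the bipartite factor lying in the closed neighbourhood of $v_2$ are those incident to $v_2$ itself (bipartiteness excludes edges among the neighbours of $v_2$). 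The same argument yields $e_R[v] = 0 + r = r$. Hence the flip condition $e_B[v] > e_R[v]$ reduces precisely to the stated constraint $x + \binom{b+1-x}{2} > r$, so $G$ is a $(b,r)$-flip graph of order $2(r+x)(b+1-x)$; minimising over feasible $x$ gives the claimed inequality. As a sanity check, $x=0$ recovers the construction $K_{b+1} \ \square \ K_{r,r}$ from Theorem~\ref{RBClassification}.

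The only mildly delicate step is the existence of the blue/red $x$-regular/$r$-regular decomposition of $K_{r+x,\,r+x}$, which is immediate from König's edge-colouring theorem for bipartite graphs (or directly from the standard factorisation of $K_{n,n}$ into $n$ perfect matchings). Everything else is a clean bookkeeping of degrees and closed-neighbourhood edge counts via the coloured Cartesian product, trading off the size of the complete factor against that of the bipartite factor so as to minimise $2(r+x)(b+1-x)$ subject to preserving the local blue majority.
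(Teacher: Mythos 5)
Your proposal is correct and is essentially identical to the paper's proof: the same CCP graph $K_{r+x,\,r+x} \ \square \ K_{b+1-x}$ with an $x$-factor of the bipartite part coloured blue, an $r$-factor coloured red, and the complete factor all blue, verified via Lemma~\ref{CPColourLemma}. Your added remark on K\H{o}nig's theorem merely makes explicit the factorisation the paper takes for granted.
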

\begin{proof}
Let $x$ be an integer satisfying the theorem's condition. Consider an edge-colouring of
$K_{r+x, r+x}$ such that an $x$-factor is coloured $B$ and an $r$-factor is coloured $R$. Also consider $K_{b+1-x}$ where all the edges are coloured $B$.
Every vertex $v$ of the CCP graph $G=K_{r+x, r+x} \ \square \ K_{b+1-x}$ has $\deg_B (v) = b - x + x = b$ and $\deg_R (v) = r$. Moreover, $e_B[v] = x + \binom{b +1 - x}{2}$ and $e_R [v] = r$. 
By our choice of $x$, it follows that $e_B[v] > e_R [v]$.
\end{proof}

The upper bound in Theorem \ref{upperbound_hrb} warrants further analysis. We first require a lemma.
\begin{lemma}\label{minima_hrb}
Let $b, r \in \mathbb{N}$ such that $3 \le b < r \le \binom{b+1}{2}-1$. Let
$x_0 = \lceil b - (1+\sqrt{1+8(r-b)})/2\rceil - 1$. Then, $2(r+x_0)(b+1-x_0)$
\begin{align*}
	&\ \ = \min \left\{2(r +x)(b +1 - x) \colon  x \in \mathbb{Z},~ 0 \leq x \leq b, ~x + \binom{b +1 - x}{2} > r\right\}\,.
\end{align*}
\end{lemma}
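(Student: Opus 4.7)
The plan is to separate the minimization into two independent pieces: identifying the feasible set of integer values of $x$, and then minimizing the objective $f(x) := 2(r+x)(b+1-x)$ over that set.

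For the objective, expand to obtain $f(x) = -2x^2 + 2(b+1-r)x + 2r(b+1)$, a downward parabola whose vertex sits at $x^{\star} = (b+1-r)/2$. The hypothesis $b < r$ forces $r \ge b+1$, so $x^{\star} \le 0$ and $f$ is strictly decreasing on $[0,b]$. Consequently, the minimum over any non-empty subset of $\{0,1,\ldots,b\}$ is attained at its largest element, and the whole problem reduces to identifying the largest feasible $x$.

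To pin down the feasibility constraint $x + \binom{b+1-x}{2} > r$, I would perform the substitution $y := b - x$, under which $\binom{b+1-x}{2} = \binom{y+1}{2} = y(y+1)/2$ and the inequality collapses cleanly to $y(y-1) > 2(r-b)$. The quadratic $y^2 - y - 2(r-b) = 0$ has roots $(1 \pm \sqrt{1+8(r-b)})/2$; since $y \ge 0$ and the negative root is irrelevant, the inequality is equivalent to $y > (1+\sqrt{1+8(r-b)})/2$, i.e., $x < b - (1+\sqrt{1+8(r-b)})/2$. The largest integer strictly below this real threshold is, by the standard identity $\max\{n \in \mathbb{Z} : n < t\} = \lceil t \rceil - 1$, exactly $x_0$ as defined.

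Two routine checks complete the argument. First, $x_0 \ge 0$ requires the threshold $b - (1+\sqrt{1+8(r-b)})/2$ to be positive; a direct rearrangement shows this is equivalent to $r < \binom{b+1}{2}$, which is in the hypothesis. Second, since $g(x) := x + \binom{b+1-x}{2}$ is a convex quadratic in $x$, its super-level set $\{g > r\}$ could a priori split into two intervals; one should verify that the larger root of $g(x) = r$ exceeds $b$, which is immediate from $r > b$, so only the ``left branch'' contributes feasible $x$ in $[0,b]$ and the feasible set really is $\{0,1,\ldots,x_0\}$. I anticipate no conceptual obstacle here; the only delicate bookkeeping is translating a strict real inequality into a largest-integer statement, for which the ceiling-minus-one convention keeps things transparent.
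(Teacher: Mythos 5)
Your proposal is correct and follows essentially the same route as the paper's proof: observe that $2(r+x)(b+1-x)$ is strictly decreasing on $[0,b]$, so the minimum sits at the largest feasible integer, then solve the quadratic feasibility constraint (the paper works directly with $g(z)=z+\binom{b+1-z}{2}-r$ rather than substituting $y=b-x$, and likewise discards the irrelevant root and verifies $x_0\ge 0$ via $r<\binom{b+1}{2}$). The substitution and the explicit $\lceil t\rceil-1$ identity are only cosmetic differences.
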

\begin{proof}
Let $g(z) = z + \binom{b+1-z}{2} - r$ be a real-valued function. As $2(r+x)(b+1-x)$ is strictly decreasing in $[0,\infty)$, the claimed minimum, in integer value, is attained for the largest possible integer $0  \leq x_0 \leq b$, such that $g(x_0) > 0$. Rearranging, $g(z)$ can be written as a quadratic in $z$, $$g(z) = \binom{b+1}{2} - r + \left(\frac{1}{2} - b\right) z + \frac{z^2}{2}$$ which has a minimum, as well as distinct roots $z_\pm = b - \frac{1 \mp \sqrt{1+8(r-b)}}{2}$. 

Then $g(z) > 0$ whenever $z < z_-$ or $z > z_+$. Since the integer $x_0$ we are seeking must satisfy $x_0 \leq b$, then the only admissible case when $g(x_0) > 0$ is when $x_0 < z_-$. Since we seek the largest such integer, then $x_0 = \left\lceil b - \frac{1+\sqrt{1+8(r-b)}}{2}\right\rceil - 1$. 
What remains is to ensure that $x_0 \geq 0$. It suffices to show that $b > \frac{1+\sqrt{1+8(r-b)}}{2}$. Rearranging, we require that $\frac{(2b - 1)^2 - 1}{8} > r-b$. Indeed,
$$\frac{(2b - 1)^2 - 1}{8} = \frac{b^2}{2} - \frac{b}{2} = \binom{b+1}{2} - b > r - b\,.$$
Hence $x_0$ is the largest integer for which the minimum is attained, as required.
\end{proof}

Substituting the value $x_0$ obtained in Lemma \ref{minima_hrb} and using Theorem \ref{upperbound_hrb} we obtain
the following easily verified corollary:
\begin{corollary}\label{hrb_best_bound}
Let $b, r \in \mathbb{N}$ such that $3 \leq b < r \le \binom{b+1}{2}-1$. Then
$$ h(b,r) \leq 2 \left(r + b + 1 - \left\lfloor \dfrac{5 + \sqrt{1+8(r-b)}}{2} \right\rfloor\right) \left\lfloor \dfrac{5 + \sqrt{1+8(r-b)}}{2} \right\rfloor\,.
$$	
\end{corollary}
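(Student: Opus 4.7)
The plan is to combine Theorem \ref{upperbound_hrb} and Lemma \ref{minima_hrb}: the former gives $h(b,r) \leq 2(r+x)(b+1-x)$ for any admissible integer $x$, and the latter identifies the optimal choice $x_0 = \lceil b - (1+\sqrt{1+8(r-b)})/2 \rceil - 1$. Thus it suffices to substitute this value of $x_0$ into the quantity $2(r+x_0)(b+1-x_0)$ and simplify to match the stated floor expression. Lemma \ref{minima_hrb} already verifies that $0 \leq x_0 \leq b$ and that $x_0 + \binom{b+1-x_0}{2} > r$, so no feasibility checks remain.

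The core of the calculation is to rewrite $b+1-x_0$ using the elementary identity $-\lceil y\rceil = \lfloor -y\rfloor$. First I would compute
\begin{align*}
b+1-x_0 &= b+1 - \left\lceil b - \frac{1+\sqrt{1+8(r-b)}}{2}\right\rceil + 1 \\
&= 2 + \left\lfloor \frac{1+\sqrt{1+8(r-b)}}{2} - b \cdot 0 \right\rfloor \\
&= \left\lfloor \frac{5+\sqrt{1+8(r-b)}}{2}\right\rfloor,
\end{align*}
absorbing the $+2$ into the floor. Writing $F := \lfloor (5+\sqrt{1+8(r-b)})/2\rfloor$, we then have $r + x_0 = r + b + 1 - F$, so that
$$2(r+x_0)(b+1-x_0) = 2(r+b+1-F)\,F,$$
which is exactly the expression in the corollary.

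The step that requires the most care is the manipulation of the nested floor/ceiling, because it is easy to drop a $+1$ when moving the constants inside. I would therefore carry it out explicitly, using $\lceil z \rceil = -\lfloor -z \rfloor$ and pulling the integer offsets $b+2$ inside the floor at a single step. Beyond that bookkeeping, the argument is a direct substitution, and no further structural obstacle arises.
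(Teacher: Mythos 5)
Your proposal is correct and matches the paper's intent exactly: the paper simply states that the corollary follows by substituting the value $x_0$ from Lemma \ref{minima_hrb} into the bound of Theorem \ref{upperbound_hrb} and calls it "easily verified", and your floor/ceiling manipulation (using $\lceil b - y\rceil = b - \lfloor y\rfloor$ for integer $b$, so $b+1-x_0 = \lfloor y\rfloor + 2 = \lfloor (5+\sqrt{1+8(r-b)})/2\rfloor$) is precisely that verification. The only blemish is the cosmetic ``$-\,b\cdot 0$'' term in your intermediate display, which should just read $\lfloor y\rfloor$ after the integer $b$ is pulled out of the ceiling; the conclusion is unaffected.
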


%\begin{proof}
%	Substituting in Theorem \ref{upperbound_hrb} for Lemmas \ref{monotone_hrb} and \ref{minima_hrb}, we get that
%	\begin{align*}
%	&h(b,r)\\
%	&\leq 2 \left(r - 1 + \left\lceil b - \dfrac{1+\sqrt{1+8(r-b)}}{2}\right\rceil\right)\left(b+2 - \left\lceil b - \dfrac{1+\sqrt{1+8(r-b)}}{2}\right\rceil\right)\\
%	&= 2 \left(r + b - 1 + \left\lceil - \dfrac{1+\sqrt{1+8(r-b)}}{2}\right\rceil\right)\left(2 - \left\lceil - \dfrac{1+\sqrt{1+8(r-b)}}{2}\right\rceil\right)\\
%	&= 2 \left(r + b - 1 - \left\lfloor  \dfrac{1+\sqrt{1+8(r-b)}}{2}\right\rfloor\right)\left(2 + \left\lfloor \dfrac{1+\sqrt{1+8(r-b)}}{2}\right\rfloor\right)\\
%	&= 2 \left(r + b + 1 - \left\lfloor \dfrac{5+\sqrt{1+8(r-b)}}{2}\right\rfloor\right)\left\lfloor \dfrac{5+\sqrt{1+8(r-b)}}{2}\right\rfloor
%\end{align*}
%as required.
%\end{proof}

Notice that for the case $b = r - 1$, valid for all $r \ge 4$, the minimum is obtained at $x_0 = r - 4$. The flip graph $G = K_{2r - 4, 2r - 4} \ \square \ K_{4}$ outlined in Theorem \ref{upperbound_hrb} is an $(r, r-1)$-flip graph with $16r - 32$ vertices. Even so, $(r-1, r)$-flip graphs exist with fewer than $16r - 32$ vertices as seen from the $(3, 4)$-flip graph in Figure 1.

%\textcolor{blue}{[Xandru] \textbf{Insert general Cayley construction - need to obtain from C+J.}} \textcolor{red}{\textbf{Is this the place where to define Cayley graphs?} \textit{I don't think it's appropriate...}}

\subsection{Weak flip graphs}

While $(2, r)$-flip graphs do not exist, weakening the flip constraint from $e_1[v] > e_2[v]$ to $e_1 [v] \geq e_2[v]$ (yet still requiring $r > b$), an admissible colouring can be found such that $E(1)$ spans a $b$-regular subgraph and $E(2)$ spans an $r$-regular subgraph. We term such graphs $(b,r)$-\textit{weak}-flip graphs. 

\begin{figure}[h!]
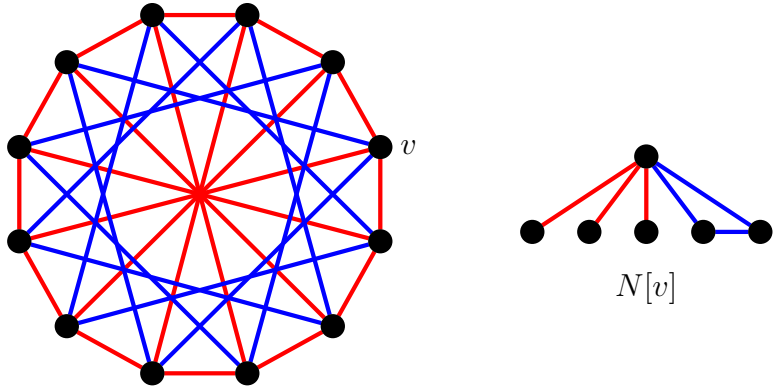

	\centering
	\ctikzfig{figures/weak_flip_3_2_smallest}
	\caption{Smallest known $(2, 3)$-weak-flip graph having $12$ vertices, with the subgraph induced by the closed neighbourhood of any vertex $v$ on the right.}
	\label{3_2_weak_flip}
\end{figure}

The CCP graph $K_{3,3} \ \square \ K_3$, with $K_{3,3}$ coloured red and $K_3$ coloured blue, is a $(2, 3)$-weak-flip graph on $18$ vertices. Figure \ref{3_2_weak_flip} illustrates the existence of a smaller $(2,3)$-weak-flip graph, having just $12$ vertices.

Likewise, $(1,r)$-flip graphs do not exist. However, not even $(1, r)$-weak-flip graphs exist. 

\begin{proposition}
Let $r \in \mathbb{N}$ such that $r > 1$. There is no $(1, r)$-weak-flip graph. 	
\end{proposition}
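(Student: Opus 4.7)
The plan is to argue by contradiction: assume that a $(1,r)$-weak-flip graph $G$ exists for some $r \ge 2$, so the blue colour class spans a perfect matching and the red colour class spans an $r$-regular subgraph. Fix a vertex $v$, let $u$ be its blue matching partner, and let $w_1, \dots, w_r$ be its red neighbours. Since $u$'s only blue edge is $uv$, any blue edge inside $N(v)$ must have both endpoints in $\{w_1, \dots, w_r\}$; let $m$ denote the number of such edges and set $f = e_2(v) \ge 0$. Then $e_1[v] = 1 + m$ and $e_2[v] = r + f$.

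The weak-flip inequality $e_1[v] \ge e_2[v]$ therefore rearranges to $m \ge r - 1 + f \ge r - 1$. On the other hand, the $m$ blue edges inside $\{w_1, \dots, w_r\}$ form a matching on $r$ vertices, so $m \le \lfloor r/2 \rfloor$. Combining these gives $\lfloor r/2 \rfloor \ge r - 1$, which already fails for every $r \ge 3$. This disposes of the case $r \ge 3$ by pure counting.

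The main difficulty is the boundary case $r = 2$, where the inequalities tighten to equalities and a global count leaves no slack. They force $m = 1$ and $f = 0$ at every vertex: the two red neighbours $w_1, w_2$ of $v$ must themselves be blue-matched, and neither $uw_1$ nor $uw_2$ may be an edge. To derive a contradiction I would run the same local argument at $w_1$, whose blue partner is $w_2$ and whose two red neighbours are $v$ together with some further vertex $x$ (necessarily distinct from $w_2$, since $w_1 w_2$ is blue). The condition $m(w_1) = 1$ then forces $v$ and $x$ to be blue-matched, so $x = u$. But this makes $uw_1$ a red edge, violating $f = 0$ at $v$, and the proof is complete.
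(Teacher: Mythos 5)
Your proof is correct and follows essentially the same route as the paper's: the matching bound $e_1[v]\le 1+\lfloor r/2\rfloor < r \le e_2[v]$ disposes of $r\ge 3$, and for $r=2$ both arguments force the two red neighbours of $v$ to be blue-matched to each other and then derive a contradiction from the second red edge at one of those neighbours. The only cosmetic difference is where the final contradiction lands (you violate $f=0$ at $v$; the paper counts $e_2[u]\ge 3$ at the red neighbour), which is the same local analysis.
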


\begin{proof}
	Consider first the case $r > 2$. Suppose that a $(1, r)$-weak-flip graph $G$ exists. Let $v$ be a vertex in such a graph. Then $e_1[v] \leq 1 + \lfloor\frac{r}{2}\rfloor < r \leq e_2[v]$ since $r > 2$. 
	In the case when $r = 2$, suppose that a $(1, 2)$-weak-flip graph $G$ exists. Then by the above argument, it follows that $e_1 [v] = e_2 [v] = 2$.
	Consider a vertex $v$ with neighbours $u, w$ and $x$ such that $\{v, w\}$ and $\{v, u\}$ are coloured $2$ and $\{v, x\}$ is coloured $1$. Clearly $\{u, w\}$ must be coloured $1$ since $e_1[v] = e_2[v] = 2$. But then $u$ has some neighbour $y$ different from $v$ and $w$, such that $\{u, y\}$ is coloured $2$. Hence $e_2 [u] \geq 3$, a contradiction.
\end{proof}
\section{Flipping with three or more colours}

Unlike the case of two colours where $(b,r)$-flip sequences are completely characterised in Theorem \ref{RBClassification}, for $k \geq 3$ colours we do not have a full characterisation of all $k$-flip sequences.
 Our first result, nonetheless, establishes a necessary condition for $3$-flip sequences.  

\begin{theorem}\label{3colour_bound}
If $(a_1, a_2, a_3)$ is a $3$-flip sequence, then $a_3 < 2(a_1)^2$. 
\end{theorem}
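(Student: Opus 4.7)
My plan is to generalise the triangle-counting argument of Proposition~\ref{rb_upperBoundThm} to three colours, with the key twist of using \emph{both} flip inequalities $e_1[v]>e_2[v]$ and $e_1[v]>e_3[v]$ simultaneously; this is what will let the final bound depend only on $a_1$. Since all the quantities are integers, the two strict inequalities combine into $2e_1[v]\ge e_2[v]+e_3[v]+2$, which after rewriting in terms of open-neighbourhood edge counts (using $e_i[v]=a_i+e_i(v)$) becomes
\[
2e_1(v)-e_2(v)-e_3(v) \;\ge\; a_2+a_3-2a_1+2.
\]
Summing over all $v\in V(G)$ then gives the global lower bound $\sum_v[2e_1(v)-e_2(v)-e_3(v)] \ge |V|(a_2+a_3-2a_1+2)$.

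For the matching upper bound I would expand the sum via triangles. Write $T_{ijk}$ for the number of triangles of $G$ whose edge-colour multiset is $\{i,j,k\}$, and let $c_j(T)$ denote the number of colour-$j$ edges of a triangle $T$. Since each vertex of $T$ ``sees'' the opposite edge, each triangle contributes $c_j(T)$ to $\sum_v e_j(v)$. Working case-by-case through the ten triangle multisets yields the clean identity
\[
\sum_v \bigl[2e_1(v)-e_2(v)-e_3(v)\bigr] \;=\; 3\bigl[2T_{111}+T_{112}+T_{113}\bigr]-3\bigl[T_{222}+T_{223}+T_{233}+T_{333}\bigr],
\]
with $T_{122}$, $T_{133}$ and $T_{123}$ all dropping out. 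This cancellation is the crucial structural insight behind the theorem and is the reason only $a_1$ will appear on the right-hand side.

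To close the argument I would discard the non-positive second group and bound $2T_{111}+T_{112}+T_{113}$ by counting pairs of colour-1 edges at each vertex, exactly as in the proof of Proposition~\ref{rb_upperBoundThm}. Each vertex contributes $\binom{a_1}{2}$ such pairs, a $T_{111}$ triangle is caught at all three of its vertices while $T_{112}$ and $T_{113}$ triangles are caught only at their colour-$1$ apex, so $3T_{111}+T_{112}+T_{113}\le |V|\binom{a_1}{2}$, and hence $2T_{111}+T_{112}+T_{113}\le |V|\binom{a_1}{2}$ since $T_{111}\ge 0$. Chaining these bounds gives $a_2+a_3-2a_1+2 \le 3\binom{a_1}{2}$; using $a_2\ge a_1+1$ to eliminate $a_2$ yields $a_3\le (3a_1^2-a_1-6)/2$, which is strictly less than $2a_1^2$ for every admissible $a_1$. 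The part I expect to be most error-prone is the triangle bookkeeping in the middle step---carefully verifying that the mixed-colour types $T_{122}$, $T_{133}$, $T_{123}$ really do cancel so that the upper bound involves only colour-1 cherries.
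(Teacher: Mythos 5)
Your argument is correct, and it in fact proves something slightly stronger than the stated theorem. The paper's own proof runs by contradiction: assuming $a_3\ge 2a_1^2$, it counts, for each vertex $v$, the at least $e_1[v]-a_1^2$ colour-$1$ edges of $N[v]$ having both endpoints in $N_2(v)\cup N_3(v)$, notes that each such edge closes a triangle with $v$ whose two remaining edges are coloured $2$ or $3$ and lie in the open neighbourhoods of the other two triangle vertices, and then averages over $V$ to produce a vertex with $e_2[v]+e_3[v]\ge 2e_1[v]$, contradicting the flip condition. You instead sum the combined inequality $2e_1(v)-e_2(v)-e_3(v)\ge a_2+a_3-2a_1+2$ over all vertices and evaluate the left-hand side exactly via $\sum_v e_j(v)=\sum_T c_j(T)$; I verified the bookkeeping you flagged as error-prone: the coefficient $2c_1(T)-c_2(T)-c_3(T)$ is $6$ on $\{1,1,1\}$, $3$ on $\{1,1,2\}$ and $\{1,1,3\}$, $0$ on $\{1,2,2\}$, $\{1,3,3\}$ and $\{1,2,3\}$, and $-3$ on the four colour-$1$-free multisets, so your identity and the subsequent cherry bound $3T_{111}+T_{112}+T_{113}\le |V|\binom{a_1}{2}$ are both right (each pair of colour-$1$ edges at a vertex closes at most one triangle, caught three times for $T_{111}$ and once otherwise). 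The payoff of your route is the sharper bound $a_3\le (3a_1^2-a_1-6)/2$, roughly $1.5a_1^2$ rather than $2a_1^2$, which is still consistent with the paper's constructions achieving $a_3\approx 0.5 a_1^2$ and its remark that the true constant lies in $[\tfrac12,2]$ --- your argument would let that interval be narrowed to $[\tfrac12,\tfrac32]$. The paper's proof is more economical in that it needs only the crude estimate that at most $a_1^2$ colour-$1$ edges of $N[v]$ meet $N_1(v)\cup\{v\}$, avoiding the ten-case decomposition, but it pays for this with the weaker constant; your version is also the more faithful generalisation of the two-colour argument in Proposition~\ref{rb_upperBoundThm}.
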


\begin{proof}
Suppose on the contrary that $(a_1, a_2, a_3)$ is a flip sequence realised by some graph $G$, but that $2a_1^2 \le a_3$. We shall prove that for some vertex $v$ of $G$, $e_1 [v] \leq e_2[v]$ or $e_1 [v] \leq e_3 [v]$. 

For $i \in \{1,2,3\}$, let $N_i(v) = \left\{u \in V \colon \{u, v\} \in E(i) \right\}$.
Clearly, there is a set of (at least) $e_1[v] - (a_1)^2$ edges of $N[v]$ coloured $1$, having both of their endpoints in $N_2(v) \cup N_3 (v)$. We may assume $e_1 [v] - (a_1)^2 \geq 0$, otherwise we are done.

Now each of these $e_1 [v] - (a_1)^2$ edges forms a unique triangle with $v$, where only one edge of the triangle is coloured $1$. The other two edges of such a triangle contribute two edges coloured using either $2$ or $3$, to some open neighbourhood. This means that the total number of edges coloured using $2$ or $3$ in all open neighbourhoods is at least $\sum_{v \in V} 2\left(e_1 [v] - (a_1)^2\right)$. Hence there is some vertex $v$ with at least $2\left(e_1 [v] - (a_1)^2\right)$ edges coloured using $2$ or $3$ in its open neighbourhood.
Therefore, $$e_2 [v] + e_3[v] \geq a_3 + a_2 + 2e_1[v] - 2(a_1)^2\,.$$
But $a_3 + a_2 + 2e_1[v] - 2(a_1)^2 \geq 2 e_1[v]$ since $a_3 + a_2 - 2(a_1)^2 \geq a_3 - 2(a_1)^2 \geq 0$.

Hence $e_3[v] + e_2[v] \geq 2e_1 [v]$. But this means that $e_1 [v] \leq e_2[v]$ or $e_1 [v] \leq e_3 [v]$, which is a contradiction since $G$ is a flip graph. 
\end{proof}

In view of Theorem \ref{3colour_bound}, it is of interest to find construction of $3$-flip sequences in
which $a_3$ is quadratic in $a_1$. To this end, we use the following proposition.

\begin{proposition}\label{3colour_construction}
Let $a_1, a_2, a_3 \in \mathbb{N}$ such that $a_1 < a_2 < a_3$.
\begin{enumerate}[i.]
	\item If $H$ is an $(a_2, a_3)$-flip graph for which $e_2[v] < \binom{a_1 + 1}{2}$ for each vertex $v$, then $(a_1, a_2, a_3)$ is a $3$-flip sequence.
	\item Suppose that $H_1, H_2$ and $H_3$ are, respectively, $a_1, a_2$ and $a_3$ regular. If for $i \in \{1, 2\}$, we have that $$\max\limits_{u \in V(H_{i+1})} e [u] < \min\limits_{v \in V(H_i)} e [v]$$ then $(a_1, a_2, a_3)$ is a $3$-flip sequence.
\end{enumerate}	
\end{proposition}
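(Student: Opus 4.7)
The plan is to derive both parts as direct applications of the coloured Cartesian product machinery developed in Section~2, in particular Lemma~\ref{CPColourLemma} and Lemma~\ref{CCPL1}.

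For part (ii), the statement is essentially a restatement of Lemma~\ref{CCPL1} in the special case $k = 3$. So my proof would be a single sentence: take $G = H_1 \ \square \ H_2 \ \square \ H_3$, colour the edges inherited from $H_j$ with colour $j$, and invoke Lemma~\ref{CCPL1} to conclude that $G$ is an $(a_1, a_2, a_3)$-flip graph, whence $(a_1, a_2, a_3)$ is a $3$-flip sequence.

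For part (i), the idea is to augment the $(a_2, a_3)$-flip graph $H$ with an extra ``colour 1'' factor that on its own satisfies the closed-neighbourhood inequality. A natural candidate is the complete graph $K_{a_1+1}$, which is $a_1$-regular and, crucially, has $e[x] = \binom{a_1+1}{2}$ for every vertex $x$ (since $N[x]$ spans the whole of $K_{a_1+1}$). Form the CCP $G = K_{a_1+1} \ \square \ H$, colour all edges inherited from $K_{a_1+1}$ with colour $1$, and let the edges inherited from $H$ keep their colours $2$ and $3$. By Lemma~\ref{CPColourLemma}, any vertex $w = (x, y) \in V(G)$ has colour degrees $\deg_1(w) = a_1$, $\deg_2(w) = a_2$, $\deg_3(w) = a_3$, as required.

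It remains to check the closed-neighbourhood inequalities $e_1[w] > e_2[w] > e_3[w]$. Applying the second part of Lemma~\ref{CPColourLemma} and noting that $H$ contains no edges of colour $1$ and $K_{a_1+1}$ contains no edges of colours $2$ or $3$, one obtains
\[
e_1[w] = \binom{a_1 + 1}{2}, \qquad e_2[w] = e_2^H[y], \qquad e_3[w] = e_3^H[y].
\]
The inequality $e_2[w] > e_3[w]$ is immediate from $H$ being an $(a_2, a_3)$-flip graph, and $e_1[w] > e_2[w]$ is precisely the hypothesis $\binom{a_1+1}{2} > e_2[y]$. Hence $G$ witnesses that $(a_1, a_2, a_3)$ is a $3$-flip sequence.

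Neither part poses any real obstacle once the CCP lemmas are in hand; the only mild design choice is identifying $K_{a_1+1}$ as the correct ``colour 1 factor'' in (i), which is natural because its closed-neighbourhood edge count $\binom{a_1+1}{2}$ is the largest possible for an $a_1$-regular graph and therefore makes the hypothesis on $e_2[v]$ as mild as possible.
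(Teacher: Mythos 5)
Your proposal is correct and matches the paper's own proof: part (ii) is indeed just Lemma \ref{CCPL1} with $k=3$, and for part (i) the paper likewise forms the coloured Cartesian product of $H$ with $K_{a_1+1}$ (coloured $1$) and reads off $e_3[v] < e_2[v] < \binom{a_1+1}{2} = e_1[v]$ exactly as you do. No differences worth noting.
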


\begin{proof}
	To prove (i), consider the CCP graph $G = H \ \square \ K_{a_1 + 1}$ where $H$ is coloured using $2$ and $3$, and $K_{a_1 + 1}$ is coloured using $1$. Then $G$ is a graph with colour degrees $a_i$ for colour $i \in \{1, 2, 3\}$. Moreover, $e_3 [v] < e_2 [v] < \binom{a_1 + 1}{2} = a_1 [v]$. Hence $G$ is an $(a_1, a_2, a_3)$-flip graph and we are done.
	
	For (ii), consider the CCP graph $G = H_1 \ \square \ H_2 \ \square \ H_3$, where $H_i$ is coloured using $i$ for $i \in \{1, 2, 3\}$. The result immediately follows by the conditions necessitated in (ii).
\end{proof}

Constructing $3$-flip sequences $(a_1, a_2, a_3)$ using Proposition \ref{3colour_construction} is relatively easy when $a_2$ and $a_3$ are not too far apart. For example, through Theorem \ref{upperbound_hrb}, we have seen the existence of $(r-1, r)$-flip graphs with $e_1[v] = r + 2$ and $e_2[v] = r$. 
Hence with $a_2 = r - 1, a_3 = r$ and $a_1 < a_2$ such that $r + 2 < \binom{a_1 + 1}{2}$, it follows that $(a_1, a_2, a_3)$ is a $3$-flip sequence. Note that we can take $a_1=\lceil \sqrt{2r} \rceil$ for $r \ge 9$.
This shows hat we can have $a_3 \approx 0.5(a_1)^2$, so Theorem \ref{3colour_bound} is tight up to a constant factor.

\subsection{Unbounded gap $k$-flip sequences}

We have seen that for two or three colours, the largest element in a flip sequence must be bounded above quadratically in the smallest element. A natural question is whether this extends to four or more colours. We shall see that the answer is negative -- indeed, we shall see constructions where the smallest element is constant and yet the largest element may be arbitrarily large.

We must first recall another handy graph product, the \textit{strong product}, along with a way to inherit an edge colouring from its factors.

\begin{definition}[Strong product] The strong product $H \boxtimes K$ of two graphs $H$ and $K$ is the graph such that $V\left(H \boxtimes K\right) = V(H) \times V(K)$ and there is an edge $\left\{(u, v), (u^\prime, v^\prime)\right\}$ in $H \boxtimes K$ if and only if either $u = u^\prime$ and $vv^\prime \in E(K)$, or
	$v = v^\prime$ and $uu^\prime \in E(H)$, or $uu^\prime \in E(H)$ and $vv^\prime \in E(K)$.
\end{definition}

Let $H$ and $K$ be two graphs with an edge-colouring from a set of colours $C$. We extend the edge-colourings of $H$ and $K$ to an edge-colouring of $H \boxtimes K$ as follows.
Consider edge $e = \left\{(u, v), (u^\prime, v^\prime)\right\}$ in $H \boxtimes K$; if $u = u^\prime$ then $e$ inherits the colouring of the edge $vv^\prime$ in $K$, otherwise if $u \neq u^\prime$ the colouring of the edge $vv^\prime$ in $H$ is inherited. Note that the inherited colourings of $H \boxtimes K$ and $K \boxtimes H$ are different, even though the two uncoloured graphs are isomorphic.

\begin{figure}[ht!]
\centering
\includegraphics[width=0.55\textwidth]{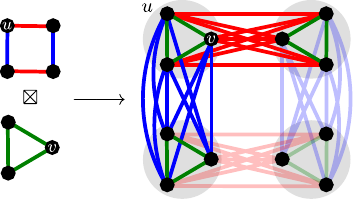}
\caption{Illustration of the edge-colouring inheritance in $K_{2, 2} \boxtimes K_3$ from its factors, where $K_{2,2}$ has red and blue coloured $1$-factor, and $K_3$ is coloured green. The edges in the closed neighbourhood of $(u,v)$ are highlighted.}
\label{SPDiagram}
\end{figure}

We are now in a position to prove the main result of this section.

\begin{theorem} \label{arbitraryGapsThm}
	Let $k \in \mathbb{N}, k > 3$. Then there is some constant $m = m(k) \in \mathbb{N}$ such that for all $N \in \mathbb{N}$, there exists a $k$-flip sequence $\left(a_1, a_2, \dots, a_k\right)$ such that $a_1 = m$ and $a_k > N$. 
\end{theorem}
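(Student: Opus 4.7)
The plan is to construct the required $k$-flip graph as a strong product $G := F \boxtimes K_{m+1}$, where $F$ is a $(k-1)$-flip graph realising the interval degree sequence $(b,\, b+1,\, \ldots,\, b+k-2)$ (furnished by the $(r, c)$-constant CCP construction of Section~5 for every sufficiently large $b$), and $K_{m+1}$ is entirely coloured by a single new colour $c_1$ that will be the smallest colour in $G$. The asymmetry of the strong-product colour inheritance (type-A edges inherit from $K_{m+1}$ while type-B and C edges inherit from $F$) is what lets a bounded-degree colour $c_1$ coexist with arbitrarily-high-degree colours inherited from $F$.

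First I would compute, by a direct calculation analogous to Lemma~\ref{CPColourLemma} (adapted for the strong product), the degrees and closed-neighbourhood colour counts at a vertex $(u,v)\in V(G)$:
\[
a_1^G = \deg K_{m+1} = m, \qquad a_i^G = (m+1)\,a_i^F = (m+1)(b+i-2) \ \text{ for } i\ge 2,
\]
together with $e_{c_1}^G[(u,v)] = (1+d_F)\binom{m+1}{2}$ and $e_{c_i}^G[(u,v)] = (m+1)^2\,e_{c_i}^F[u]$ for $i\ge 2$, where $d_F := \sum_{i=2}^{k} a_i^F$. Thus $a_1^G=m$ is fixed and $a_k^G = (m+1)(b+k-2)$ exceeds any prescribed $N$ once $b$ is taken large enough. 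The inner flip inequalities $e_{c_i}^G>e_{c_{i+1}}^G$ for $i\ge 2$ cancel a common factor $(m+1)^2$ and hence reduce to $e_{c_i}^F>e_{c_{i+1}}^F$, which hold by $F$'s $(k-1)$-flip property.

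The boundary flip condition $e_{c_1}^G>e_{c_2}^G$ simplifies, after the identities above, to
\[
e_{c_2}^F[u] \;<\; \frac{m\,(1+d_F)}{2(m+1)}.
\]
For $F$ obtained as a CCP of suitable $(a_i^F,\gamma_i)$-constant graphs (following Section~5), one has $e_{c_i}^F[v] = a_i^F + \gamma_i$ with $\gamma_i$ a constant depending only on $k$ (choose the minimal admissible $\gamma_i$'s, which decrease by~$2$ as $i$ grows, so as to maintain strict $e$-monotonicity). Then $e_{c_2}^F[u]\sim b$ and $d_F\sim (k-1)b$ as $b\to\infty$, so the boundary inequality asymptotically becomes $1 < m(k-1)/(2(m+1))$, i.e.\ $m(k-3)>2$. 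In particular $m(k):=3$ suffices for every $k\ge 4$, and for each such $k$ there is a threshold $b_k$ beyond which choosing $b\ge b_k$ makes both $a_k^G>N$ and the boundary inequality hold.

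The hard part will be establishing the existence of the interval $(k-1)$-flip graph $F$ with nearly-tight closed-neighbourhood colour counts $e_{c_i}^F[v] = a_i^F + O_k(1)$, which relies on the $(r, c)$-constant graph existence theorem stated in the abstract (and proved in Section~5), together with the CCP Lemma~\ref{CCPL2}(ii) to combine the individual monochromatic $(r,c)$-constant factors into one multicoloured $F$ with the required interval degree sequence.
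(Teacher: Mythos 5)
Your construction is correct, and while it shares the paper's key structural idea --- a strong product in which a complete-graph fibre carries the smallest colour, so that colour's degree stays bounded while its closed-neighbourhood count scales with the degree of the other factor --- the ingredients are genuinely different. The paper takes $G = H \boxtimes K_{2n}$ with $n = \Theta(k^2)$, where $K_{2n}$ is multi-coloured (colour $1$ plus thin $1$-factors of colours $2,\dots,k-1$) and $H$ is a bipartite regular graph with coloured matchings, whose triangle-freeness keeps the between-fibre contributions controllable; the whole verification is a self-contained direct computation. You instead take a monochromatic $K_4$ fibre over an interval $(k-1)$-flip graph $F$ with neighbourhood counts $e_{c_i}^F[v]=a_i^F+2(k-i)$, delegating all but one inequality to $F$'s flip property; I checked your degree and $e_{c_i}$ identities for $F \boxtimes K_{m+1}$ and the reduction of the boundary condition to $m(k-3)>2$, and they are right. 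Your route buys two things: it is shorter once the Section~5 machinery is available, and it gives $m(k)\le 3$, a constant, whereas the paper's construction gives $m(k)=2n-1-\binom{k-1}{2}=\Theta(k)$ --- a genuine improvement relevant to one of the stated open problems. The trade-off is dependence on Theorem~\ref{RC_Const_Exists}, where the paper's argument is elementary. Two polish points: the multicoloured $F$ is assembled via Lemma~\ref{CCPL1} (CCP Lemma~I), not Lemma~\ref{CCPL2}(ii), which only combines factors of a single colour; and you cannot cite Theorem~\ref{intervalFlippingThm}(i) as stated (its constants are $\Theta(b^2)$, not $O_k(1)$) --- you must, as you indicate, rerun that construction with $(b+i-2,\,2(k-i))$-constant factors, whose existence for large $b$ does follow from Theorem~\ref{RC_Const_Exists}(i).
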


\begin{proof}
Let $K$ be the complete graph $K_{2n}$ where $n > \dfrac{k(k^2 - 2k + 1)}{4(k-3)}$. Since $K$ is a complete graph on an even number of vertices, $K$ has a $1$-factorisation. For $1 < i < k$, let $k-i$ $1$-factors be coloured using colour $i$ and let the remaining edges be coloured $1$. It follows then that every vertex $v$ in $K_{2n}$ has $$\deg^K_1 (v) = 2n - 1 - \binom{k-1}{2}$$ incident edges coloured $1$ and $\deg^K_i (v) = k - i$ incident edges coloured $i$ for $1 < i < k$. For convenience, define $\deg^K_k (v) = 0$. Observe that the sequence $\deg^K_1 (v), \dots, \deg^K_k (b)$ is strictly decreasing, noting that by the choice of $n$,
$$
\deg^K_1 (v) - \deg^K_2 (v) > \dfrac{k(k-1)}{k-3} > 0\,.
$$
Since $K$ is a complete graph and each vertex $v$ has the same number of incident edges coloured $i$, then $e_i^K [v] = n \deg^K_i (v)$ for $1 \leq i \leq k$.

We now show that for every vertex $v$ in $K$, $(k-1)(e_1^K[v] - e_2^K[v]) > 4n^2$. Rearranging and substituting for $e_1^K[v]$ and $e_2^K[v]$ in terms of $n$ and $k$, we must show that $n > \dfrac{k(k^2 - 2k + 1)}{4(k-3)}$. This follows by our choice of $n$.
 
Consider $t \in \mathbb{N}$ such that
$$
t \geq \dfrac{4 n^2}{(k-1)\min\limits_{1 < i < k}\{e^K_i[v] - e^K_{i+1}[v]\}} = \dfrac{4n}{k-1}\,.
$$
Let $\rho = \dfrac{(k-1)(2t + k - 2)}{2}$ and let $H$ be a $\rho$-regular bipartite graph. For $0 \leq i \leq k - 2$, let $t + i$ matchings of $H$ be coloured using colour $2 + i$.

Let $G$ be the graph $H \boxtimes K$, inheriting the edge-colourings of $H$ and $K$ respectively. For a vertex $u$ in $H$, let $G_u$ be the subgraph of $G$ induced by the vertices $\left\{(u, w) \colon w \in V(K)\right\}$ in $G$; note that $G_u$ is isomorphic to $K$. 

If an edge $ux$ in $H$ is coloured $i$, then all the edges in $G$ between $V(G_u)$ and $V(G_x)$ are coloured $i$, by the inherited edge-colouring. Hence, if $u$ in $H$ and $v$ in $K$ have $\deg_i^H (u)$ and $\deg_i^K (v)$ incident edges coloured $i$, respectively, then the vertex $(u, v)$ in $G$ has $\deg_i^K (u) + |V(K)| \deg_i^H (v)$ incident edges coloured $i$. Consequently, by construction each vertex $(u, v)$ in $G$ has:
$$\deg^G_i \big((u, v)\big) = 
\begin{cases}
2n - 1 - \binom{k-1}{2} & i = 1 \\
(k - i) + 2n(t + i - 2) & 2 \leq i \leq k
\end{cases}$$
which is strictly increasing.

Now, consider the vertex $(u, v)$ in $G$. Let $u_1, \dots, u_\rho$ be the neighbours of $u$ in $H$. In $G$ we have that every vertex in $G_u$ has a neighbour in $G_{u_i}$ for $1 \leq i \leq \rho$, but for $1 \leq i < j \leq \rho$, we have that there are no edges between $G_{u_i}$ and $G_{u_j}$ in $G$, since $H$ is bipartite and $u_i$ and $u_j$ belong to the same partite set.
Therefore, the edges coloured $i$ in the closed neighbourhood of $(u, v)$ are:
\begin{enumerate}[i.]
	\item  those edges coloured $i$ in the closed neighbourhood of $v$ in each of the $\rho + 1$ copies of $K$ (namely $G_u, G_{u_1}, \dots, G_{u_\rho}$), totalling $(\rho + 1) e_i^K [v]$ edges,
	\item and a further $e_i^H [u]|V(K)|^2$ edges from the matchings coloured $i$ in $H$.
\end{enumerate}

This is clearly exemplified in Figure \ref{SPDiagram}. By construction of $H$ and $K$, it follows that for any vertex $(u, v)$ of $G$,
$$e_i^G \left[(u, v)\right] = 
\begin{cases}
(\rho + 1) e^K_1 [v] & i = 1 \\
(\rho + 1) e^K_i [v] + 4n^2(t + i - 2) & 2 \leq i \leq k
\end{cases}$$
which we now show to be strictly decreasing. 

Firstly note that since $\rho + 1 = (k-1)t + \binom{k-1}{2} + 1$, then there exists $\kappa \in \mathbb{R}$ such that $\kappa > 1$ and $\rho + 1 = (k-1)t\kappa$. Now, recall that $K$ has the property that $(k-1)(e_1^K [v] - e_2^K [v]) > 4n^2$.

Since $\kappa > 1$, it follows that $$(\rho + 1)\left(e_1^K [v] - e_2^K [v]\right) = (k-1)\left(e_1^K [v] - e_2^K [v]\right)(t\kappa) > 4n^2 t$$ and therefore since $e_1^K [v] > e_2^K [v]$, we have that $(\rho + 1) e_1^K [v] > (\rho + 1) e_2^K [v] + 4n^2 t$. Consequently, $e_1^G \left[(u, v)\right] > e_2^G \left[(u, v)\right]$ as required.

Now consider $2 \leq i \leq k - 1$. By the choice of $t$ and $\kappa > 1$, we have that 
\begin{align*}
(\rho + 1)\left(e_i^K [v] - e_{i+1}^K [v]\right) &= (k-1)\left(e_i^K [v] - e_{i+1}^K [v]\right)(t\kappa)\\
 &> 4n^2 \\
 &= 4n^2(t + i - 1) - 4n^2(t + i - 2)
\end{align*}
and therefore $e_i^G \left[(u, v)\right] > e_{i+1}^G \left[(u, v)\right]$.

It follows that $G$ is a flip graph on $k > 3$ colours, such that for any vertex $(u, v)$, $\deg_1 \big((u,v)\big)$ is only dependent on $k$ and $\deg_k \big((u,v)\big)$ increases with $t$. Since $t$ is not bounded from above in the construction, then given any $N \in \mathbb{N}$, a sufficiently large $t$ can be found such that $\deg_k \big((u,v)\big) > N$.
\end{proof}
\section{$(r,c)$-Constant graphs, long flipping intervals, and sufficient conditions for $k$-flip sequences}

This section explores the notion of $(r,c)$-constant graphs, first introduced in Section 2, and their use
in constructing long flipping intervals. This will then allow us to deduce a sufficient condition for $k$-flip sequences.

\subsection{Existence of $(r, c)$-constant graphs}

We recall that an $(r,c)$-constant graph $H$ is an $r$-regular graph such that for every vertex $v \in V(H)$, $e(v) = c$. We have already seen, as summarised in Figure \ref{RC_Const_Hierarchy}, that many familiar classes of graphs are subclasses of $(r,c)$-constant graphs. 

An inevitable problem regarding $(r,c)$-constant graphs is: given a positive integer $r$, for which integers $c$, $0 \leq c \leq \binom{r}{2}$, does there exist an $(r,c)$-constant graph?

The \textit{spectrum} of $r$, denoted by $\mathsf{spec}(r)$, is the set of all such integers $c$ such that an $(r,c)$-constant graph exists. The following theorem shows that $\mathsf{spec}(r)$ contains nearly all $c$ in the interval $\left[0, \binom{r}{2}\right]$. 

\begin{theorem}[Existence of $(r,c)$-constant graphs]\label{RC_Const_Exists}
	Let $r \in \mathbb{N}$. 
	\begin{enumerate}[i.]
		\item For every integer $c$ such that $0 \leq c \leq \dfrac{r^2}{2} - 5r^{\frac{3}{2}}$, $c \in \mathsf{spec}(r)$.
		\item Suppose $k \in \mathbb{N}$ and $r \geq 3k$. Then $\binom{r}{2} - k \notin \mathsf{spec}(r)$.
	\end{enumerate}
\end{theorem}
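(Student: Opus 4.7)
The plan is to handle the two parts separately, since they call for very different techniques.

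For part (i), I would construct $(r,c)$-constant graphs as coloured Cartesian products, leveraging Lemma \ref{CCPL2}(ii): a Cartesian product of $(r_i,c_i)$-constant graphs is $(\sum r_i,\sum c_i)$-constant. The building blocks are $K_{s+1}$ (which is $(s,\binom{s}{2})$-constant), $K_3$ (which is $(2,1)$-constant), and any triangle-free $d$-regular graph such as $K_{d,d}$ (which is $(d,0)$-constant). Thus it suffices to express $c = \binom{s}{2} + \binom{s'}{2} + m$ for non-negative integers satisfying $s + s' + 2m \leq r$, and then take the Cartesian product of $K_{s+1}$, $K_{s'+1}$, $m$ copies of $K_3$, and a triangle-free $(r - s - s' - 2m)$-regular graph. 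I would produce the decomposition greedily: let $s$ be the largest integer with $\binom{s}{2} \leq c$, then $s'$ the largest with $\binom{s'}{2} \leq c - \binom{s}{2}$, and $m = c - \binom{s}{2} - \binom{s'}{2}$. The remainders satisfy $c - \binom{s}{2} \leq s-1$ and $m \leq s'-1$, so $s' + 2m \leq 3\sqrt{2s}$ up to additive constants. A short calculation shows $c \leq r^2/2 - 5r^{3/2}$ forces $s \leq r - 5\sqrt{r}$ essentially, and since $3\sqrt{2} < 5$ the total cost $s + s' + 2m$ stays at most $r$. The main obstacle is to track the square-root constants carefully and verify that the chosen slack of $5r^{3/2}$ is exactly what the greedy bookkeeping demands.

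For part (ii), I would proceed by contradiction, assuming $G$ is $(r,\binom{r}{2}-k)$-constant with $r \geq 3k$ and $k \geq 1$, so that each open neighbourhood contains exactly $k$ non-edges. The key invariant is the equivalence relation $u \equiv v \iff N[u] = N[v]$; denote classes by $[u]$. Counting ordered length-two paths $u$-$v$-$w$ with $w \notin N[u]$ gives $r(r-1) - 2(\binom{r}{2}-k) = 2k$, so $\sum_{v \in N(u)} |N(v) \setminus N[u]| = 2k$. Whenever $N(v) \subseteq N[u]$ for $v \in N(u)$, the equality $|N(v)| = r = |N[u] \setminus \{v\}|$ forces $N(v) = N[u] \setminus \{v\}$, hence $N[v] = N[u]$ and $v \in [u]$. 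At most $2k$ neighbours of $u$ fail this condition, so $|[u]| \geq r+1-2k$. Moreover every non-edge of $N(u)$ lies in $N[u] \setminus [u]$: if $a \in [u] \cap N(u)$, then $N[a] = N[u]$ makes every $b \in N(u) \setminus \{a\}$ adjacent to $a$.

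Finally, since $k \geq 1$, pick a non-edge $\{a,b\} \subseteq N(u)$. Applying the same reasoning to $a$: noting $u \notin [a]$ (because $a \notin [u]$) forces $[a] \subseteq N(u)$, and combined with $[a] \cap [u] = \emptyset$ this gives $[a] \subseteq N[u] \setminus [u]$; similarly $[b] \subseteq N[u] \setminus [u]$. Since $a \not\sim b$, the classes $[a]$ and $[b]$ are distinct, hence disjoint. Combining, $2(r+1-2k) \leq |[a]| + |[b]| \leq |N[u] \setminus [u]| = r+1-|[u]| \leq 2k$, which rearranges to $r \leq 3k-1$, contradicting $r \geq 3k$. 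The central insight here is the introduction of the equivalence classes $[u]$; once that invariant is isolated, the arithmetic of fitting two large disjoint classes $[a]$, $[b]$ inside the small set $N[u] \setminus [u]$ closes the proof cleanly, and it is perhaps surprising that merely one non-edge suffices to drive the contradiction.
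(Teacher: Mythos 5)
Your proposal is correct, and both parts deviate from the paper in instructive ways. For part (i) the paper uses the same Cartesian product $\square_j K_{a_j+1}$, but simply \emph{cites} the number-theoretic fact (Reznick; Caro et al., Lemma 3.4) that $c\le r^2/2-5r^{3/2}$ admits a representation $c=\sum_j\binom{a_j}{2}$ with $\sum_j a_j\le r$; you instead re-derive a sufficient version of it with a two-step greedy decomposition $c=\binom{s}{2}+\binom{s'}{2}+m$ plus $K_3$'s and triangle-free padding. Your bookkeeping does close: maximality gives $s'+2m\le 3\sqrt{2s}\approx 4.25\sqrt{r}$, while $\binom{s}{2}\le r^2/2-5r^{3/2}$ gives $s^2\le r^2-9r^{3/2}$ and hence $s\le r-4.5\sqrt{r}$ (using $\sqrt{1-x}\le 1-x/2$), so $s+s'+2m\le r$ with room to spare for all $r\ge 100$, the only range where the hypothesis is non-vacuous; this makes the proof self-contained at the cost of not exhausting the full strength of the cited lemma. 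For part (ii) your argument is genuinely different from the paper's, though both pivot on the same key fact that at least $r+1-2k$ vertices $y$ of $N[u]$ satisfy $N[y]=N[u]$ (your class $[u]$, the paper's ``full-degree'' vertices). The paper then picks a vertex $x\in N(v)$ with a neighbour $w$ outside $N[v]$ and shows $w$ is non-adjacent to the $\ge k+1$ full-degree vertices inside $N(x)$, forcing $e(x)\le\binom{r}{2}-k-1$; you instead take a non-edge $\{a,b\}$ of $N(u)$ and squeeze the two disjoint classes $[a],[b]$, each of size at least $r+1-2k$, into $N[u]\setminus[u]$, which has at most $2k$ elements, yielding $r\le 3k-1$. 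Your route is arguably cleaner (pure pigeonhole on class sizes, no auxiliary vertex $w$), and it makes the structural invariant explicit; the paper's route is shorter in that it needs only one application of the full-degree count rather than three (at $u$, $a$ and $b$). All the steps you leave implicit (existence of the non-edge, $[a]\subseteq N(u)$ via $u\in N[a]\setminus[a]$, disjointness of $[a]$ and $[b]$) check out.
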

\begin{proof}
	First recall that for every $r,c\in \mathbb{N}$ such that $0 \le c \le r^2/2-5r^{3/2}$,
	there are positive integers $a_1,\ldots,a_s$ summing to $r$ such that $\sum_{j=1}^s \binom{a_j}{2}=c$;
	see \cite{reznick-1989} and \cite{CARO2023167}, Lemma 3.4.
	Consider the Cartesian product $G = \square_{j = 1}^s K_{a_j + 1}$. Then $G$ is $r$-regular with $r = \sum_{j=1}^s a_j$, and $e(v) = \sum_{j=1}^s \binom{a_j}{2} = c$, so (i) follows.
	
	We proceed to prove (ii). Suppose the contrary, and let $H$ be an $r$-regular graph where each vertex $v$ has $e(v) = \binom{r}{2} - k$. Then every vertex in $N(v)$ has at least $r - k$ neighbours in $N[v]$. 
	
	Let $x$ be a vertex in $N(v)$ with at most $r - 1$ neighbours in $N[v]$. Observe that the $k$ non-edges in the subgraph induced by $N[v]$ span at most $2k$ vertices. Hence there are $r+1-2k \geq k+1$ vertices in $N[v]$, including $v$, whose $r$ neighbours are all in $N[v]$ and therefore they are all neighbours of $x$. 
	
	Now consider $N[x]$, which contains some vertex $w$ not in $N[v]$, since $x$ has at most $r-1$ neighbours in $N[v]$. Then this vertex $w$ is not adjacent to at least $k+1$ vertices in $N[x] \cap N[v]$ (those having degree $r$ in $N[v]$). Therefore $w$ has at most $r - k - 1$ neighbours in $N[x]$. Consequently, $e(x) \leq \binom{r}{2} - k - 1$, a contradiction.
\end{proof}

\subsection{Flipping intervals and $k$-flip sequences}

The usefulness of $(r,c)$-constant graphs stems from the fact that together with CCP, they serve as the building blocks for long interval flips. In particular, we obtain a sufficient condition for $k$-flip  sequences.

\begin{theorem}\label{intervalFlippingThm}
Let $b \in \mathbb{N}$.
\begin{enumerate}[i.]
	\item If $b \geq 101$ then $\left[b, b + \left\lfloor \frac{1}{4} \big(b^2 - 10b^{\frac{3}{2}}\big)\right\rfloor\right]$ is a flipping interval.
	\item If $b \geq 3$ then $\left[b, 2b - 2\right]$ is a flipping interval.
\end{enumerate}	
\end{theorem}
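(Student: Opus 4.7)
Both parts follow the same template: produce monochromatic regular graphs $H_1, \dots, H_k$ with degrees $a_i = b + i - 1$ such that $e^{H_i}[v]$ is constant across the vertices of $H_i$ and strictly decreasing in $i$, then invoke Lemma \ref{CCPL1} applied to their coloured Cartesian product (with $H_i$ receiving colour $i$) to obtain a $(b, b+1, \ldots, b+k-1)$-flip graph. Choosing each $H_i$ to be $(a_i, c_i)$-constant gives $e^{H_i}[v] = a_i + c_i = b + i - 1 + c_i$, so the strict-decrease condition reduces to $c_i \ge c_{i+1} + 2$. The two parts differ only in how such $(a_i, c_i)$-constant graphs are supplied.

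For \textbf{part (i)}, let $k := 1 + \lfloor (b^2 - 10 b^{3/2})/4 \rfloor$ and set $c_i := 2(k - i)$ for $1 \le i \le k$, so that $c_i - c_{i+1} = 2$ holds by construction. Theorem \ref{RC_Const_Exists}(i) then produces an $(a_i, c_i)$-constant graph $H_i$ provided $c_i \le a_i^2/2 - 5 a_i^{3/2}$. The binding case is $i = 1$: by the choice of $k$,
\[ c_1 \;=\; 2\left\lfloor \tfrac{b^2 - 10 b^{3/2}}{4} \right\rfloor \;\le\; \tfrac{b^2}{2} - 5 b^{3/2} \;=\; \tfrac{a_1^2}{2} - 5 a_1^{3/2}. \]
For $i > 1$ the constraint survives because $r \mapsto r^2/2 - 5 r^{3/2}$ is increasing on $[b, \infty)$ once $b \ge 101 > 56.25$, while $c_i < c_1$. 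Applying Lemma \ref{CCPL1} to the $H_i$ yields the required flip graph.

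For \textbf{part (ii)}, the bound in Theorem \ref{RC_Const_Exists}(i) is vacuous for small $b$, so I construct the $H_i$ explicitly. Take $H_1 := K_{b+1}$ and $H_i := K_{b+2-i} \ \square \ K_{2i-2,\, 2i-2}$ for $2 \le i \le b-1$. Since $K_{b+2-i}$ is $(b+1-i, \binom{b+1-i}{2})$-constant and $K_{2i-2,\, 2i-2}$ is triangle-free and hence $(2i-2, 0)$-constant, Lemma \ref{CCPL2}(ii) yields that $H_i$ is $(b + i - 1, \binom{b+1-i}{2})$-constant, a formula that also matches $H_1 = K_{b+1}$ with the convention of an empty bipartite factor. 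Hence $e^{H_i}[v] = (b + i - 1) + \binom{b+1-i}{2}$, and the forward difference is
\[ e^{H_{i+1}}[v] - e^{H_i}[v] \;=\; 1 + \binom{b-i}{2} - \binom{b+1-i}{2} \;=\; 1 - (b - i), \]
which is at most $-1$ for every $1 \le i \le b - 2$ provided $b \ge 3$. Lemma \ref{CCPL1} then delivers a flip graph realising the interval.

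\textbf{Main obstacle.} There is no genuinely difficult step once Theorem \ref{RC_Const_Exists} is available: part (i) hands off all combinatorial work to that theorem, while part (ii) is essentially a one-line monotonicity check. The only real care is bookkeeping --- choosing the $c_i$'s so that the strict-decrease condition and the existence window are simultaneously tight, and handling the degenerate $K_{0,0}$ factor at $i = 1$ by letting $H_1 = K_{b+1}$ stand alone. A minor sanity check worth performing at the end is that the formulas for $e^{H_i}[v]$ in part (ii) indeed match those from Lemma \ref{CCPL2}(ii) at the boundary $i = 1$, which they do because $\binom{b}{2} + b = \binom{b+1}{2} = e[v]$ in $K_{b+1}$.
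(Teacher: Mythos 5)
Your proof is correct and follows essentially the same route as the paper: part (i) stacks $(b+i-1,c_i)$-constant graphs from Theorem \ref{RC_Const_Exists} with the $c_i$ decreasing in steps of $2$ and concludes via Lemma \ref{CCPL1} (the paper starts its $c$-sequence at $\lfloor b^2/2-5b^{3/2}\rfloor$ rather than ending it at $0$, which is immaterial), and part (ii) is the paper's clique-times-bipartite construction with the single $2j$-factored $K_{n,n}$ replaced by separate $K_{2i-2,\,2i-2}$ factors, yielding identical degree and $e_j[v]$ counts. All the verifications (the existence window at $i=1$, the monotonicity of $r^2/2-5r^{3/2}$, and the forward difference $1-(b-i)\le -1$) check out.
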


\begin{proof}
	For (i), consider the interval $\left[b,b+k\right]$ where $k = \lfloor (b^2 - 10b^{\frac{3}{2}})/4\rfloor$. Since $b \geq 101$, we have $k \geq 12$.
	Set $M_1 = \lfloor b^2/2-5b^{3/2} \rfloor$.  For  $1 \le j \le k$, set $H_j$ to be a
$(b+j-1 , M_1 - 2(j-1) )$-constant graph which exists by Theorem \ref{RC_Const_Exists} and observe
$M_1 \ge 2k \ge 2(j-1)$  for $1 \le j \le k$. Consider the CCP graph $G = \square_{j=1}^k H_j$ where $H_j$ is  coloured using colour $j$. Then by Lemma \ref{CCPL1}, $G$ is a $(b, b+1, \dots, b+k)$-flip graph and therefore (i) follows.

We now prove (ii). Consider the triple $(b+j, b+1-j, 2j)$ where $0 \leq j \leq b-2$. Consider a regular bipartite graph $H$ of degree $\sum_{j=1}^{b-2} 2j = 
(b-1)(b-2)$, where a $2j$-factor of $H$ is coloured $j$ for $1 \leq j \leq b-2$.

We may take $H$ to be $K_{n,n}$ where $n = (b-1)(b-2)$. Now consider the CCP graph $G = H \ \square \ \left(
\square_{j=1}^{b-2} K_{b+1-j}\right)$ where $K_{b+1-j}$ is coloured $j$ for $1 \leq j \leq b-2$.

In every vertex $v$ of $G$, for $1 \leq j \leq b-2$, $\deg_j (v) = b + j$ which is increasing and $e_j [v] = \binom{b+1-j}{2} + 2j$, which is decreasing. Hence $G$ is a $(b, b+1, \dots, 2b - 2)$-flip graph.
\end{proof}

We note that for $b \leq 107$ the interval $[b, 2b-2]$ contains 
$[b, b + \lfloor (b^2 - 10b^{\frac{3}{2}})/4\rfloor]$.

\begin{corollary}[Sufficient condition for $k$-flip sequences]\label{sufficientKFlip}
	Suppose that $k \geq 2$. Let $3 
	\leq a_1 < a_2 < \dots < a_k$ be a sequence of integers such that either $a_k \leq 2a_1 - 2$ or $a_k \leq a_1 + \left\lfloor \frac{1}{4} \Big((a_1)^2 - 10(a_1)^{\frac{3}{2}}\Big)\right\rfloor$, then $(a_1, \dots, a_k)$ is a $k$-flip sequence.
\end{corollary}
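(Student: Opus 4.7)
The plan is to construct, in each case of the disjunction, an explicit $(a_1,\ldots,a_k)$-flip graph via a coloured Cartesian product. Rather than trying to extract a ``subsequence flip'' from an existing interval flip as provided by Theorem~\ref{intervalFlippingThm}, I would mimic those constructions but apply them directly to the indices $a_1 < a_2 < \cdots < a_k$, adapting the degree and edge-count parameters to fit the given subsequence.

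For the case $a_k \leq a_1 + \lfloor (a_1^2 - 10 a_1^{3/2})/4 \rfloor$, I would set $c_k = 0$ and $c_i = c_{i+1} + (a_{i+1} - a_i + 1)$ for $i<k$, which telescopes to $c_i = (a_k - a_i) + (k - i)$. By Theorem~\ref{RC_Const_Exists}(i), provided $c_i \leq a_i^2/2 - 5 a_i^{3/2}$, there exists an $(a_i, c_i)$-constant graph $H_i$; take $G = \square_{i=1}^k H_i$ with $H_i$ coloured~$i$. Lemma~\ref{CPColourLemma} then gives $\deg_i(v) = a_i$ and $e_i[v] = a_i + c_i$ for every vertex $v$, with $e_i[v] - e_{i+1}[v] = 1 > 0$ by construction. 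The only nontrivial check is the spectrum bound at $i = 1$; since $a_1,\ldots,a_k$ are strictly increasing integers we have $k - 1 \leq a_k - a_1$, so
\begin{equation*}
c_1 = (a_k - a_1) + (k - 1) \,\leq\, 2(a_k - a_1) \,\leq\, 2\cdot \frac{a_1^2 - 10 a_1^{3/2}}{4} \,=\, \frac{a_1^2}{2} - 5 a_1^{3/2},
\end{equation*}
and the analogous bound for each $i > 1$ follows from monotonicity of $r \mapsto r^2/2 - 5 r^{3/2}$ on $[a_1, \infty)$ (the hypothesis forces $a_1 \geq 101$ for there to be any admissible $a_k$, which puts us past the threshold $r \geq 225/4$).

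For the case $a_k \leq 2a_1 - 2$, I would adapt the construction in the proof of Theorem~\ref{intervalFlippingThm}(ii). Writing $j_i = a_i - a_1$ so that $0 = j_1 < j_2 < \cdots < j_k \leq a_1 - 2$, take the $k$ complete-graph factors $K_{a_1 + 1 - j_i}$ each coloured~$i$, together with one further bipartite factor $H = K_{N,N}$ with $N = 2\sum_{i \geq 2} j_i$, edge-decomposed by K\"onig's theorem into $2 j_i$-regular spanning subgraphs with the $2 j_i$-factor coloured $i$ for $2 \leq i \leq k$. Let $G$ be the CCP of all these factors. Then $\deg_i(v) = a_i$ at every vertex $v$, and since $H$ is triangle-free, Lemma~\ref{CPColourLemma} yields $e_1[v] = \binom{a_1+1}{2}$ and $e_i[v] = \binom{a_1+1-j_i}{2} + 2j_i$ for $i \geq 2$. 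The identity $\binom{n}{2} - \binom{n-\Delta}{2} = \Delta(2n - \Delta - 1)/2$ applied with $n = a_1 + 1 - j_i$ and $\Delta = j_{i+1} - j_i \geq 1$ then gives $e_i[v] - e_{i+1}[v] = \Delta(2a_1 - 3 - j_i - j_{i+1})/2 > 0$, because $j_i + j_{i+1} \leq (a_1 - 3) + (a_1 - 2) = 2a_1 - 5 < 2 a_1 - 3$. The main subtlety I anticipate is the spectrum check in the first case: the hypothesis only bounds $a_k - a_1$ by \emph{half} of Theorem~\ref{RC_Const_Exists}(i)'s ceiling, and recovering the missing factor of two is precisely the role played by the integrality observation $k - 1 \leq a_k - a_1$; beyond that, both cases are routine applications of the CCP Lemma combined with an elementary calculation.
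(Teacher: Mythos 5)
Your proposal is correct, and it reaches the conclusion by rebuilding the two constructions of Theorem~\ref{intervalFlippingThm} directly for the given subsequence, whereas the paper disposes of the corollary in one line by taking the edge-induced subgraph on the colours of the subsequence inside the interval-flip graphs already constructed there. The underlying machinery is the same (Theorem~\ref{RC_Const_Exists}(i) plus the CCP lemmas in one case; the $K_{a_1+1-j}\ \square\ K_{N,N}$ trick in the other), but your parameterisation in the $(r,c)$-constant case is genuinely different: the paper starts $c$ near the top of the spectrum and decrements by $2$ per colour, while you anchor $c_k=0$ and telescope upward by $a_{i+1}-a_i+1$, which forces the extra integrality observation $k-1\le a_k-a_1$ to recover the factor of two needed for the spectrum bound $c_1\le a_1^2/2-5a_1^{3/2}$ --- I checked that step, the monotonicity claim past $r=225/4$, the fact that the hypothesis forces $a_1\ge 101$ in that case, and the binomial identity giving $e_i[v]-e_{i+1}[v]=\Delta(2a_1-3-j_i-j_{i+1})/2>0$ in the other case; all are sound. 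What each approach buys: the paper's extraction argument is essentially free given Theorem~\ref{intervalFlippingThm}, while your direct construction uses only $k$ factors (and a smaller bipartite factor of degree $2\sum_{i\ge 2}j_i$) rather than the full interval's worth, so it yields smaller realising graphs and makes every inequality explicit rather than inherited.
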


\begin{proof}
	In both cases, the sequence $(a_1, \dots, a_k)$ is a subsequence of a flipping interval in Theorem \ref{intervalFlippingThm} and hence a $k$-flip sequence, as we can consider from the construction of (i) or (ii) in Theorem \ref{intervalFlippingThm} the edge-induced subgraph by the colours corresponding to the subsequence.
\end{proof}

Observe that the existence of $(r, c)$-constant graphs can be used to construct a wider class of graphs than just flip graphs. Namely $k$-edge coloured graphs, with prescribed colour degrees and edge-coloured neighbourhood sizes, in which for all vertices $v$, $\deg_j (v) = r_j$ and $e_j [v] = m_j$ for some positive integers $r_j, m_j$ satisfying $m_j \geq r_j$, $1 \leq j \leq k$.
To achieve this, we only need to assure the existence of $(r_j, c_j)$-constant graphs $H_j$ such that $m_j = r_j + c_j$ for $1 \leq j \leq k$, and then consider the CCP graph $\square_{j=1}^k H_j$.
\section{Existence of $t$-neighbourhood flip graphs}

We have so far considered flip colourings with regards to the immediate neighbourhood of a vertex. A natural extension of this problem, as outlined in Problem 5 in the introduction, is to consider the neighborhood consisting of all vertices at a distance (at most) $t$ from $v$. 

Before proceeding further, we require an adaptation of our existing notation. Let $k \in \mathbb{N}$ and let $G$ be a graph with an edge-coloring from $\{1, \dots, k\}$. As before, $E(j)$ is the set of edges coloured $j \in \{1, \dots, k\}$ in $G$. 
\begin{enumerate}
	\item For two vertices $u$ and $v$, $d(u,v)$ is the \textit{distance} between $u$ and $v$, i.e., the length of a shortest path between the two vertices.
	\item For a vertex $v$ and $t \in \mathbb{N}$, $N_t(v) = \{u \in V(G) \colon 1 \leq d(u,v) \leq t\}$ is its \textit{open} $t$-neighbourhood.
	\item For a vertex $v$ and $t \in \mathbb{N}$, $N_t[v] = N_t(v) \cup \{v\}$ is its \textit{closed} $t$-neighbourhood.
	\item For a vertex $v$ and $t \in \mathbb{N}$, $e_{j, t} (v) = \left|E(j) \cap E\left(N_t (v)\right)\right|$.
	\item For a vertex $v$ and $t \in \mathbb{N}$, $e_{j, t} [v] = \left|E(j) \cap E\left(N_t [v]\right)\right|$.
\end{enumerate} 

By $\Delta(G)$ and $g(H)$ we shall denote, as usual, the maximum degree of $G$ and the girth (shortest cycle length) of $G$, respectively. We now extend our general flip colouring problem to $t$-neighbourhoods: Given a graph $G = (V, E)$, and $k, t \in \mathbb{N}$ such that $k \geq 2$, does there exist an edge-colouring $f \colon E(G) \to \{1, \dots, k\}$ such that for every vertex $v$
\begin{itemize}
	\item $\deg_j (v) > \deg_i (v)$ for $1 \leq i < j \leq k$ (forcing global majority $e_j (G) > e_i (G)$),
	\item for $1 \leq s \leq t$, $e_{j,s} [v] < e_{i,s} [v]$ for $1 \leq i < j \leq k$ (forcing opposite majority order up to a distance $t$ from $v$, with respect to the global $e_j (G)$ and the local $\deg_j (v)$).
\end{itemize}

If such a colouring exists, then $G$ is said to be a $([t], k)$-flip graph (with respect to $f$). When we do not concern ourselves with the number of colours used, we simply say that $G$ is a $[t]$-flip graph.

As before, we shall mostly consider a more restricted version of this problem, where for every $j \in \{1, \dots, k\}$, the edge set $E(j)$ spans a regular subgraph of degree $a_j$, where $a_1 < a_2 < \dots < a_k$ and resulting in a $([t], (a_1, \dots, a_k))$-flip graph. 

We shall restrict to the case of two colours, demonstrating the existence of $([t], 2)$-flip graphs from two different perspectives, namely through Cayley graphs and packing arguments. 

\subsection{Constructing $\left([t], 2\right)$-flip graphs through Cayley graphs}

This subsection is devoted to constructing of $([t], 2)$-flip graphs through Cayley graphs.

Let $\Gamma$ be an abelian group and let $S$ be a subset of $\Gamma$ such that $S$ is inverse-closed and does not contain the identity element. Recall that the \textit{Cayley graph} $\cay{\Gamma}{S}$ is a graph with vertex set $\Gamma$ and edge set $\{\{g,gs\} \colon g \in \Gamma,~ s \in S\}$. The set $S$ is often termed as the \textit{connecting set} of the Cayley graph. 

By $\mathbf{0}$ we shall denote the all-zeros vector in $\mathbb{Z}_2^n$. Given any $i \in \{1, \dots, n\}$, let $\mathbf{e}_i$ be the vector in $\mathbb{Z}_2^n$ which is $1$ at the $i^\textrm{th}$ position, and $0$ everywhere else. Given any $1 \leq s \leq n$ and $0 \leq j \leq n - s$, we denote by $W_{s, j}$ the set of all binary vectors with the first $s$ entries all zero and exactly $j$ nonzero subsequent entries, namely
$$
W_{s, j} = \left\{\mathbf{w} \in \mathbb{Z}_2^n \colon \mathbf{w} \cdot \left(\sum_{i=1}^s \mathbf{e}_i\right) = 0 \ \wedge \ \mathbf{w} \cdot \left(\sum_{i=s+1}^n \mathbf{e}_i\right) = j
\right\}\,.
$$

Having established our working notation, we proceed to prove our main result for this subsection.

\begin{theorem}\label{CayleyTFlip}
	Let $s, t \in \mathbb{N}$, $t < s$. There exists a $\left([t], (2^s - 1, 2^s)\right)$-flip graph.	
\end{theorem}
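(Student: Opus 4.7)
The plan is to construct an explicit Cayley graph on $\Gamma = \mathbb{Z}_2^n$ with $n = s + 2^s$. I would take the blue (colour~$1$) connecting set $S_1$ to be the $2^s-1$ nonzero vectors supported on the first $s$ coordinates, and the red (colour~$2$) connecting set $S_2 = W_{s,1} = \{\mathbf{e}_{s+1}, \ldots, \mathbf{e}_n\}$, of size $2^s$. Both sets are inverse-closed (since $\mathbb{Z}_2^n$ has exponent $2$), avoid the identity, and are disjoint, so this defines a simple $2$-edge-coloured graph with blue degree $2^s-1$ and red degree $2^s$. By vertex-transitivity it suffices to verify $e_{1,d}[\mathbf{0}] > e_{2,d}[\mathbf{0}]$ for every $1 \le d \le t$ at the identity.

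Writing vertices as pairs $(u,w) \in \mathbb{Z}_2^s \times \mathbb{Z}_2^{2^s}$, a blue step alters only $u$ (by an arbitrary nonzero element of $\mathbb{Z}_2^s$) while a red step flips exactly one coordinate of $w$. Any walk from $\mathbf{0}$ to $(u,w)$ therefore uses at least $\mathrm{wt}(w)$ red steps plus at least one blue step whenever $u \ne \mathbf{0}$, and this lower bound is tight since a single element of $S_1$ realises any nonzero $u$. Hence $(u,w) \in N_d[\mathbf{0}]$ iff $\mathrm{wt}(w) \le d$ when $u = \mathbf{0}$ and $\mathrm{wt}(w) \le d-1$ otherwise.

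Grouping $N_d[\mathbf{0}]$ by its projection onto the last $2^s$ coordinates, a projection indexed by $w$ is \emph{full} (all $2^s$ choices of $u$ appear) exactly when $\mathrm{wt}(w) \le d-1$, and on each full class the blue edges form a $K_{2^s}$; the remaining classes (those with $\mathrm{wt}(w) = d$) are singletons. This gives $e_{1,d}[\mathbf{0}] = \binom{2^s}{2} \sum_{k=0}^{d-1} \binom{2^s}{k}$. For red edges, for each fixed $u$ and each of the $2^s$ directions $\mathbf{e}_{s+i}$, I enumerate unordered pairs $\{(u,w),(u,w+\mathbf{e}_{s+i})\}$ lying in $N_d[\mathbf{0}]$ by normalising the $(s+i)$-th coordinate of $w$ to zero and bounding the remaining weight. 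Splitting on $u = \mathbf{0}$ versus $u \ne \mathbf{0}$ and summing yields $e_{2,d}[\mathbf{0}] = 2^s\bigl[A_{d-1} + (2^s-1) A_{d-2}\bigr]$, with $A_j = \sum_{k=0}^{j} \binom{2^s-1}{k}$.

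Pascal's identity $\sum_{k=0}^{d-1} \binom{2^s}{k} = A_{d-1} + A_{d-2}$ then collapses the required inequality to $(2^s-3)\binom{2^s-1}{d-1} > 2 A_{d-2}$. The case $d = 1$ is simply $2^s > 3$, so $s \ge 2$. For $2 \le d \le t \le s-1$, since $d-1 \le (2^s-1)/2$ the binomials $\binom{2^s-1}{k}$ increase up to $k = d-1$ with consecutive ratio at most $\alpha = (d-1)/(2^s-d+1)$, giving a geometric bound $A_{d-2} \le \binom{2^s-1}{d-1}\cdot \alpha/(1-\alpha)$ which reduces the desired inequality to $2^s(2d+1) - 4(d-1) < 2^{2s}$; this follows from $2d+1 \le 2s-1 < 2^s$. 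The main obstacle is this final numerical check in the tight regime $d = t = s-1$, where genuine slack exists only because $2^s$ grows exponentially in $s$. The combinatorial heart of the argument is that the blue subgraph fills a complete $K_{2^s}$ on every heavy-enough projection class, while red contributes at most one matching per coordinate direction; this structural asymmetry is exactly what opens the victory margin.
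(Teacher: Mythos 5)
Your construction is exactly the paper's: the Cayley graph of $\mathbb{Z}_2^{s+2^s}$ with the blue connecting set being the nonzero vectors of the subgroup spanned by the first $s$ coordinates and the red connecting set being the remaining $2^s$ standard basis vectors, followed by a count of coloured edges in each $N_d[\mathbf{0}]$. The only difference is bookkeeping — you derive closed-form totals $e_{1,d}[\mathbf{0}]=\binom{2^s}{2}\sum_{k=0}^{d-1}\binom{2^s}{k}$ and $e_{2,d}[\mathbf{0}]=2^s\bigl[A_{d-1}+(2^s-1)A_{d-2}\bigr]$ and verify the inequality directly via a geometric-series bound, whereas the paper computes the same quantities as layer-by-layer increments and shows the blue increment always dominates the red one — and both routes are correct.
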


\begin{proof}
	Let $n = 2^s + s$. Consider $B = \textrm{span}\{\mathbf{e}_1, \dots, \mathbf{e}_s\} \backslash \{\mathbf{0}\}$ and notice that $|B| = 2^s - 1$. Let $R = \{\mathbf{e}_{s+1}, \dots, \mathbf{e}_n\}$; by our choice of $n$, we have that $|R| = 2^s$. Consider $\cay{\mathbb{Z}_2^n}{R \cup B}$ with the edge-colouring $f \colon E \rightarrow \{1, 2\}$ such that given $v \in \mathbb{Z}_2^n$ and $\alpha \in R \cup B$: $$f\left(\{v, \alpha v\}\right) = 
	\begin{cases}
		1, & \alpha \in B \\
		2, & \alpha \in R
	\end{cases}$$
	
	Note that $R \cup B$ spans $\mathbb{Z}_2^n$,
	$R \cup B$ is inverse-closed, and $R \cap B = \emptyset$. Hence, $f$ is well-defined.
	
	Since a Cayley graph is vertex-transitive, it suffices to consider a single vertex. Consider $\mathbf{0} \in \mathbb{Z}_2^n$; by the edge-colouring $f$, we have that $$\deg_1 (\mathbf{0}) = 2^s - 1 < 2^s = \deg_2 (\mathbf{0})\,.$$
	
	Now, $N_1 (\mathbf{0}) = R \cup B$; we will add each of $R$ and $B$ to $N_1 (\mathbf{0})$ so that we find the vertices in $N_2 (\mathbf{0})$. Since $B \cup \{\mathbf{0}\}$ is a vector space, then in particular $B + B = B$. On the other hand, adding $B$ to $R$ results in the set $B + W_{s, 1}$. Hence, $(R \cup B) + B = B + W_{s, 1}$. Since neither $R$ nor $B$ includes $\mathbf{0}$, it follows that $(R \cup B) \cap (B + W_{s, 1}) = \emptyset$ and therefore $B + W_{s, 1} \subseteq N_2 (\mathbf{0})$. Likewise, adding $R$ to $B$ gives $B + W_{s, 1}$ once more and $R + R = W_{s,2}$. Therefore, $$N_2 (\mathbf{0}) = \left(B + W_{s, 1}\right) \ \dot\cup \ W_{s, 2}$$ and repeating the above argument for $1 \leq j < t$, we get that: $$N_{j+1} (\mathbf{0}) = \left(B + W_{s, j}\right) \ \dot\cup \ W_{s, j + 1}.$$
	
	Note that for $\mathbf{w} \in W_{s, j}$, $B + \mathbf{w}$ is a clique isomorphic to $K_{2^s - 1}$ since $\mathbf{w}$ is not in the span of $B$. More so, for $\mathbf{w}_1, \mathbf{w}_2 \in W_{s, j}$ such that $\mathbf{w}_1 \neq \mathbf{w}_2$, we have that $(B + \mathbf{w}_1) \cap (B + \mathbf{w}_2) = \emptyset$. Therefore, $(2^s - 1)\binom{n - s}{j} = (2^s - 1)\binom{2^s}{j}$ edges coloured $1$ arise between $W_{s,j}$ and $B + W_{s,j}$. Observe also that $(R + N_j (\mathbf{0})) \cap N_j (\mathbf{0}) = \emptyset$, and therefore the subgraph induced by $N_j (\mathbf{0})$ contains no edges coloured $2$.
	
	The edges coloured $1$ between $N_j (\mathbf{0})$ and $N_{j+1} (\mathbf{0})$ arise by adding $B$ to $W_{s, j}$ and therefore by our previous remark there are $(2^s - 1)\binom{2^s}{j}$ such edges. Meanwhile, the edges coloured $2$ between $N_j (\mathbf{0})$ and $N_{j+1} (\mathbf{0})$ arise by adding $R$ to $N_j (\mathbf{0})$. Adding $R$ to $W_{s, j}$ results in $(2^s - j) |W_{s, j}| = (2^s - j)\binom{2^s}{j}$ edges coloured $2$ between $N_j (\mathbf{0})$ and $N_{j+1} (\mathbf{0})$. On the other hand, adding $R$ to $B + W_{s, j-1}$ maps each clique in $B + W_{s, j-1}$ to a total of $2^s - j + 1$ cliques in $B + W_{s, j}$, with a perfect matching between every such pair of cliques. Therefore, there are an additional $(2^s - 1)(2^s - j + 1)\binom{2^s}{j-1}$ edges coloured $2$ between $N_j (\mathbf{0})$ and $N_{j+1} (\mathbf{0})$.
	
	By our previous remark, the subgraph induced by $N_{j+1} (\mathbf{0})$ contains no edges coloured $2$ and hence it follows that: 
	\begin{equation} \label{CayleyEqn1}
		e_{2, j+1} [\mathbf{0}] = e_{2, j} [\mathbf{0}] + (2^s - 1)(2^s - j + 1)\binom{2^s}{j - 1} + (2^s - j)\binom{2^s}{j}
	\end{equation}
	while between the vertices in $N_{j+1} (\mathbf{0})$ there are $\binom{n - s}{j} = \binom{2^s}{j}$ cliques coloured $1$ which are isomorphic to $K_{2^s - 1}$ and therefore:
	\begin{equation} \label{CayleyEqn2}
		e_{1, j+1} [\mathbf{0}] = e_{1, j} [\mathbf{0}] + \binom{2^s - 1}{2}\binom{2^s}{j} + (2^s - 1)\binom{2^s}{j} = e_{1, j} [\mathbf{0}] + \binom{2^s}{2}\binom{2^s}{j}\,.
	\end{equation}
	Now,
	\begin{align*}
		(2^s - 1)(2^s - j + 1)\binom{2^s}{j - 1} + (2^s - j)\binom{2^s}{j} & = (2^s - 1)j \binom{2^s}{j} + (2^s - j)\binom{2^s}{j} \\
		&\leq (2^s - 1)(j+1)\binom{2^s}{j}\\
		&< \binom{2^s}{2}\binom{2^s}{j}
	\end{align*}
where the last inequality follows from $j + 1 \leq t \leq s - 1 < \frac{2^s}{2}$.
	Consequently, from (\ref{CayleyEqn1}) and (\ref{CayleyEqn2}), we have for $1 \leq j < t$ that if $e_{2, j} [\mathbf{0}] < e_{1, j} [\mathbf{0}]$, then
	\begin{equation} \label{CayleyEqn3}
		e_{2, j+1} [\mathbf{0}] < e_{1, j+1} [\mathbf{0}]\;.
	\end{equation}
	Hence, it only remains to show that $e_{1,1} [\mathbf{0}] > e_{2,1} [\mathbf{0}]$.
	By choice of $R$ and $B$, we have that the vertices of $B$ in $\cay{\mathbb{Z}_2^n}{R \cup B}$ induce the complete graph $K_{2^s - 1}$ and therefore $$e_{1,1} [\mathbf{0}] = (2^s - 1) + \binom{2^s - 1}{2} = \binom{2^s}{2}$$ while the the vertices in $R$ are all linearly independent and hence $e_{2,1} [\mathbf{0}] = 2^s$. Hence, indeed, $e_{1,1} [\mathbf{0}] > e_{2,1} [\mathbf{0}]$.
\end{proof} 

\subsection{Constructing $([t], 2)$-flip graphs through packings}

In this subsection we construct $([t], 2)$-flip graphs using two classical graph theoretic results, concerned with the existence of $r$-regular graphs with large girth, and with graph packings.

\begin{theorem}[Erd\H{o}s-Sachs \cite{10.1007/s00493-014-2897-6, Exoo2008, LAZEBNIK1995275}]\label{ErdosSachs}
	Given integers $r \ge 2$ and $k \ge 3$, there are infinitely many connected $r$-regular graphs with girth at least $k$. 
\end{theorem}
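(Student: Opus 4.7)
The plan is to proceed in two stages: first establish that for each pair $(r,k)$ with $r \ge 2$ and $k \ge 3$ there exists \emph{at least one} connected $r$-regular graph of girth $\geq k$, then manufacture an infinite family from it.

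The case $r = 2$ is handled immediately: the cycle $C_n$ for any $n \ge k$ is $2$-regular, connected, and has girth $n \ge k$, so infinitely many such graphs are available at once. So I focus on $r \ge 3$.

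For existence when $r \ge 3$, I would use the probabilistic / counting method on random $r$-regular graphs. Sampling via the configuration model on $n$ vertices (with $rn$ even), the expected number of cycles of length $\ell$ converges, as $n \to \infty$, to $(r-1)^\ell/(2\ell)$, a quantity independent of $n$. Summing over $\ell = 3, \ldots, k-1$ bounds the expected total number of short cycles by a constant $c = c(r,k)$. By Markov's inequality, some $r$-regular graph $G$ on $n$ vertices contains at most $c$ cycles of length $< k$. I would then apply an edge-switching surgery: for each remaining short cycle $C$, pick an edge $e = uv$ on $C$ and a second edge $e' = xy$ whose endpoints lie outside the $(2k)$-ball around $e$; replace $\{e,e'\}$ by $\{ux,vy\}$. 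This preserves $r$-regularity, destroys $C$, and (because $e'$ is far from $e$) introduces no new cycles of length $< k$. When $n$ is large compared to $c, r, k$, a suitable $e'$ always exists, so iterating eliminates every short cycle and yields a single $r$-regular graph of girth $\ge k$; a final connectivity adjustment (swap across components if needed) produces a connected one.

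For the infinitely many part, starting from a single connected $r$-regular graph $G_0$ of girth $\ge k$, I would construct arbitrarily large finite connected covers $\tilde G \to G_0$. Covers are automatically $r$-regular, and any cycle in $\tilde G$ projects to a closed walk of the same length in $G_0$ containing a cycle of at most that length; hence $\operatorname{girth}(\tilde G) \ge \operatorname{girth}(G_0) \ge k$. Standard voltage-graph (permutation-voltage) constructions provide connected $m$-fold covers for every $m \ge 1$, yielding infinitely many pairwise non-isomorphic graphs (distinguished by order) of the required type.

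The main obstacle is the switching step: one must verify that the auxiliary edge $e'$ can be chosen so that each swap \emph{strictly} reduces the number of short cycles (rather than trading one for another) and without disconnecting the graph. Both requirements rest on $n$ being large relative to $r,k,c$, and the essential work is a counting estimate that bounds the number of ``bad'' candidate edges $e'$ (those lying too close to $e$, or whose swap would create a short cycle) and shows it is strictly less than the total number of edges. This is the heart of the Erd\H{o}s--Sachs surgery, and is where any fully detailed proof would concentrate its calculations.
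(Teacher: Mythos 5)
The paper does not prove this statement at all: Theorem~\ref{ErdosSachs} is imported as a classical result with citations to the literature, so there is no internal proof to compare yours against. Judged on its own terms, your outline follows a standard and essentially sound modern route: the $r=2$ case via cycles is correct; for $r\ge 3$ the first-moment computation in the configuration model (expected number of $\ell$-cycles tending to $(r-1)^\ell/(2\ell)$, hence $O(1)$ short cycles in expectation, hence some graph with at most a constant number of them) is the standard Bollob\'as--Wormald estimate; the switching surgery to kill the remaining short cycles is the standard repair step; and passing to connected $m$-fold covers to get infinitely many examples is valid, since covers preserve regularity and cannot decrease girth (a cycle upstairs projects to a tailless non-backtracking closed walk of the same length downstairs, which contains a cycle), and connected covers of every degree exist because a connected $r$-regular graph with $r\ge 3$ has positive first Betti number. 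This is genuinely different from the original Erd\H{o}s--Sachs argument, which is an explicit deterministic induction/counting construction that moreover bounds the \emph{order} of the smallest such graph; your probabilistic route is shorter but nonconstructive and gives weaker quantitative information, which is irrelevant for the way the theorem is used in the paper (only existence of arbitrarily large such graphs is needed, in Theorem~\ref{PackingTFlip}).

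The one place where your argument is asserted rather than established is the switching step, and you correctly identify it as the crux. Two details deserve care if you write it out. First, the configuration model produces multigraphs, so loops and parallel edges must be counted among the ``short cycles'' to be removed (their expected numbers are also $O(1)$, so this costs nothing, and $k\ge 3$ means eliminating all cycles of length $<k$ leaves a simple graph). Second, when you replace $\{uv,xy\}$ by $\{ux,vy\}$ you must rule out new short cycles through \emph{either} new edge and through \emph{both} new edges simultaneously (a cycle of the form $u\hbox{--}x\hbox{--}\cdots\hbox{--}y\hbox{--}v\hbox{--}\cdots\hbox{--}u$); requiring $x,y$ to lie outside a ball of radius roughly $2k$ around $\{u,v\}$ handles all cases, and since that ball contains at most $O\bigl(r^{2k}\bigr)$ edges while the graph has $rn/2$ edges, a valid $e'$ exists for $n$ large. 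These are routine but necessary verifications; with them supplied, your proof is complete.
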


\begin{theorem}[Sauer-Spencer-Catlin \cite{Catlin1, SAUER1978295}]\label{SauerSpencerCatlin}
	Let $G$ and $H$ be two graphs on $n$ vertices, such that $2 \Delta (G) \Delta (H) < n$. Then there exists a packing of $G$ and $H$ into an $n$ vertex set, with no overlapping edges.
\end{theorem}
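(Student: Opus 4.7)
The plan is to run a local exchange argument on bijections, in the spirit of Sauer--Spencer. A packing of $G$ and $H$ into a common vertex set of size $n$ is equivalent to a bijection $\phi\colon V(G)\to V(H)$ with $\phi(u)\phi(v)\notin E(H)$ whenever $uv\in E(G)$. I introduce the conflict count $f(\phi)=|\{uv\in E(G):\phi(u)\phi(v)\in E(H)\}|$; a packing is precisely a bijection with $f(\phi)=0$. The strategy is to choose $\phi^*$ minimising $f$ and, supposing for contradiction $f(\phi^*)\geq 1$, produce a single swap that strictly reduces the conflict count.

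Fix a conflict edge $uv\in E(G)$, and for each $w\in V(G)\setminus\{u,v\}$ let $\phi_w$ be obtained from $\phi^*$ by swapping the values $\phi^*(v)$ and $\phi^*(w)$. The difference $f(\phi_w)-f(\phi^*)$ is determined only by edges of $G$ incident to $v$ or to $w$ (the edge $vw$, if present, is invariant under the swap), so it splits cleanly into four indicator sums over $x\in N_G(v)\setminus\{w\}$ and $y\in N_G(w)\setminus\{v\}$, comparing membership of $\phi^*(v)\phi^*(x),\phi^*(w)\phi^*(x),\phi^*(v)\phi^*(y),\phi^*(w)\phi^*(y)$ in $E(H)$. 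Summing $f(\phi_w)-f(\phi^*)$ over all admissible $w$ and switching the order of summation, the two \emph{creation} sums (new conflicts produced by the swap) are each bounded above by $\Delta(G)\Delta(H)$: for each fixed neighbour $x$ of $v$ there are at most $\Delta(H)$ choices of $w$ with $\phi^*(w)\in N_H(\phi^*(x))$, and $|N_G(v)|\leq\Delta(G)$. Meanwhile the \emph{destruction} sum from the $v$-side already receives a guaranteed contribution of $n-2$ from the fixed conflict edge $uv$ alone, which is destroyed by every such swap, giving
$$
\sum_{w\in V(G)\setminus\{u,v\}}\bigl(f(\phi_w)-f(\phi^*)\bigr)\;\leq\;2\Delta(G)\Delta(H)-(n-2).
$$

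Under the hypothesis $2\Delta(G)\Delta(H)<n$ this is a non-positive integer, and hence some $w$ must yield $f(\phi_w)<f(\phi^*)$, contradicting minimality of $\phi^*$. The main obstacle is precisely this final tightness: the weak inequality above must be sharpened to a strict one, which requires carefully tracking the excluded vertices $\{u,v,x\}$ (or $\{u,v,y\}$) through each sub-sum, in particular noting that the vertex $x=u\in N_G(v)$ is excluded from the swap set and therefore contributes an extra unit to the $v$-side destruction count. This bookkeeping is where Catlin's refinement over the original Sauer--Spencer swap argument sits, and is what aligns the exchange estimate exactly with the hypothesis $2\Delta(G)\Delta(H)<n$.
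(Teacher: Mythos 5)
The paper does not prove this statement: it is quoted as a classical packing theorem of Sauer--Spencer (together with Catlin's work) and used as a black box in the proof of Theorem \ref{PackingTFlip}, so there is no internal proof to compare against. Your approach is the standard swapping argument for this theorem and the skeleton is right, but as written the final step does not close. Your displayed inequality
$$\sum_{w}\bigl(f(\phi_w)-f(\phi^*)\bigr)\le 2\Delta(G)\Delta(H)-(n-2)$$
is correct, but under the hypothesis $2\Delta(G)\Delta(H)\le n-1$ its right-hand side is only bounded by $1$, not by $0$ as you assert; and even a bound of $0$ would not suffice, since you need the sum of these $n-2$ integer increments to be \emph{strictly negative} in order to extract a single $w$ with $f(\phi_w)<f(\phi^*)$. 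So the deferred ``sharpening'' is not optional bookkeeping --- it is exactly where the theorem lives at this threshold, and it is not carried out.

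You also locate the missing units in the wrong place. The contribution of the conflict edge $uv$ to the $v$-side destruction sum is exactly $n-2$ (each admissible $w$ destroys the old conflict pair once); excluding $u$ from the swap set cannot raise this. The two needed units come instead from the \emph{creation} sums: in the first, the term $x=u$ contributes at most $\Delta(H)-1$ rather than $\Delta(H)$, because $\phi^*(v)\in N_H(\phi^*(u))$ while $w\ne v$; in the second, the term $y=u$ (which genuinely occurs, since $\phi^*(u)\in N_H(\phi^*(v))$) contributes at most $\Delta(G)-1$ rather than $\Delta(G)$, because $v\in N_G(u)$ while $w\ne v$. This yields
$$\sum_{w}\bigl(f(\phi_w)-f(\phi^*)\bigr)\le 2\Delta(G)\Delta(H)-2-(n-2)=2\Delta(G)\Delta(H)-n\le -1,$$
hence some swap strictly decreases $f$, contradicting minimality. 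With these two corrections your argument becomes the standard proof.
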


A rooted tree of is $(j,b)$-\textit{perfect} if every internal vertex has $b$ children and all leaves are at distance $j$ from the root.

\begin{theorem} \label{PackingTFlip}
	Let $t \in \mathbb{N}$. There exist $([t], 2)$-flip graphs.
\end{theorem}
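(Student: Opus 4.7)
The plan is to construct the flip graph by packing a locally dense blue graph (disjoint copies of $K_{b+1}$) with a locally tree-like red graph of very large girth, using Theorems~\ref{ErdosSachs} and~\ref{SauerSpencerCatlin}. Fix $b \ge 3$ and set $r = b+1$. Take $n$ a multiple of $b+1$ with $n > 2br$, and, by Theorem~\ref{ErdosSachs}, an $r$-regular graph $R$ on $n$ vertices whose girth $g$ exceeds $(b+r)^{t+1}$, a trivial upper bound on $|N_t[v]|$ in any graph of maximum degree $b+r$. Let $B$ be the disjoint union of $n/(b+1)$ copies of $K_{b+1}$, a $b$-regular graph with $\Delta(B) = b$. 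Since $2\Delta(B)\Delta(R) = 2br < n$, Theorem~\ref{SauerSpencerCatlin} produces an edge-disjoint packing of $B$ and $R$ into a graph $G$ on $n$ vertices; colouring the copies of $B$ blue and those of $R$ red yields $\deg_1(v) = b < r = \deg_2(v)$, handling the global condition.

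For the local condition, I would fix a vertex $v$ and radius $s \in \{1,\ldots,t\}$, and let $\mathcal{C}_s$ denote the family of blue cliques meeting $N_s[v]$. Two observations drive the argument. First, any vertex $u \in N_{s-1}[v]$ pulls its entire blue clique into $N_s[v]$, so each clique in $\mathcal{C}_{s-1}$ contributes $\binom{b+1}{2}$ blue edges and $e_{1,s}[v] \ge |\mathcal{C}_{s-1}|\binom{b+1}{2}$. Second, by the girth condition on $R$, no red cycle lies inside $N_s[v]$; consequently the red subgraph induced on $N_s[v]$ is a forest, and a finer accounting based on the \emph{tree-of-cliques} picture (each $K_{b+1}$ has $(b+1)r$ outgoing red edges to distinct neighbouring cliques in the locally acyclic regime, split into one ``back'' edge and $(b+1)r - 1$ forward edges per clique in the quotient graph) yields an upper bound on $e_{2,s}[v]$ expressible in terms of $|\mathcal{C}_{s-1}|$ and $|\mathcal{C}_s|$. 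Setting up and solving the resulting linear recursion for $|\mathcal{C}_s|$ then shows that, provided $b \ge 3$, the blue lower bound exceeds the red upper bound at every radius $s \le t$, so $e_{1,s}[v] > e_{2,s}[v]$ as required.

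The hard part will be making the tree-of-cliques picture fully rigorous: Theorem~\ref{SauerSpencerCatlin} only supplies \emph{some} edge-disjoint packing and offers no direct control of local configurations, so in principle two distinct cliques could be joined by multiple red edges, or short blue--red alternating cycles could appear, inflating $e_{2,s}[v]$ beyond the clean recursive bound. This is neutralised by choosing $g$ much larger than $(b+r)^{t+1}$ so that the quotient graph on cliques is also locally acyclic within radius $t$; if any residual degeneracies remain, a probabilistic or greedy refinement of the packing eliminates them, ensuring that $e_{1,s}[v] > e_{2,s}[v]$ holds uniformly in $v$ and $s \le t$, completing the construction of a $([t],2)$-flip graph for every $t \in \mathbb{N}$.
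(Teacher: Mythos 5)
There is a genuine gap, and it sits exactly where you flagged it: with disjoint copies of $K_{b+1}$ as the blue graph, the local blue count depends entirely on how the packing scatters $N_s[v]$ across cliques, and Theorem~\ref{SauerSpencerCatlin} gives no control over this whatsoever. Concretely, the only bounds available to you are $e_{1,s}[v] \ge |\mathcal{C}_{s-1}|\binom{b+1}{2}$ together with $|\mathcal{C}_{s-1}| \ge |N_{s-1}[v]|/(b+1)$, which guarantee no more than $e_{1,s}[v] \ge \frac{b}{2}|N_{s-1}[v]|$. On the other hand, every red edge incident to a vertex of $N_{s-1}[v]$ has both endpoints in $N_s[v]$, so $e_{2,s}[v] \ge \frac{r}{2}|N_{s-1}[v]|$; since $r > b$, the \emph{guaranteed} red count already exceeds the \emph{guaranteed} blue count at every radius $s \ge 2$, so no recursion assembled from these estimates can close. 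What you actually need is that the cliques meeting $N_{s-1}[v]$ are spread out --- on the order of a constant fraction of $|N_{s-1}[v]|$ distinct cliques rather than $|N_{s-1}[v]|/(b+1)$ --- and the closing appeal to ``a probabilistic or greedy refinement of the packing'' is not a proof of that; it is the entire difficulty of the theorem.

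The paper's proof removes the dependence on the packing by making the blue graph itself locally a connected tree of cliques: it takes $H^\ast$ to be $(b+1)$-regular of large girth and uses the line graph $L(H^\ast)$ (which is $2b$-regular) as the blue factor. Already inside the blue graph, the closed $j$-neighbourhood of a vertex contains $2\frac{b^j-1}{b-1}$ complete $(b+1)$-cliques, hence more than $b^{j+1}$ blue edges --- a bound certified before the packing is performed and only improved by adding red edges. The red side is then handled essentially as you propose (the girth of the red factor exceeds $|N_j[v]|$, so the red subgraph induced on $N_j[v]$ is a forest with fewer than $|N_j[v]| < 2((q+3)b)^j$ edges), and the comparison $b^{j+1} \ge 2((q+3)b)^j$ closes by choosing $b \ge 2(q+3)^t$. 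Note also that the blue degree is then $2b$, so the red degree must satisfy $r \ge 2b+1$ rather than $r = b+1$; your choice $r=b+1$ would have to be abandoned in any repaired version along these lines.
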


\begin{proof}
Let $b, r \in \mathbb{N}$ such that for some $q \in \mathbb{N}$, $q \geq 2$, we have that $(q+1)b \geq r \geq 2b + 1$ and $b \geq 2(q+3)^t$. We will construct a $\big([t], (2b, r)\big)$-flip graph. 

Suppose that $G^\ast$ and $H^\ast$ are connected graphs such that $G^\ast$ is $r$-regular with girth $g(G^\ast) > 2((q+3)b)^t$, and $H^\ast$ is $b+1$ regular with sufficiently large girth. Note that $L(H^\ast)$, the line graph of $H^*$, is $2b$-regular. We shall assume subsequently that $G^\ast$ and $H^\ast$ are as large as necessary. The existence of such graphs $G^\ast$ and $H^\ast$ is guaranteed by Theorem \ref{ErdosSachs}.

Let $p, p^\prime \in \mathbb{N}$ such that $2\Delta(G^\ast)\Delta(L(H^\ast)) < p \left|V(G^\ast)\right| = p^\prime \left|V\left(L(H^\ast)\right)\right|$. Then let $G = pG^\ast$ and $H = p^\prime L(H^\ast)$.
Since $G$ and $H$ are the union of disjoint copies of $G^\ast$ and $L(H^\ast)$ respectively, we have that $\Delta(G) = \Delta(G^*)$ and $\Delta(H) = \Delta(L(H^\ast))$. Furthermore, $g(G) = g(G^\ast)$. We will colour the edges of $H$ using $1$ and the edges of $G$ using $2$. 

Consider the vertex $v=\{x, y\}$ in $H$ (so $xy$ is an edge of some copy of $H^*$). Since the girth of $H^\ast$ is sufficiently large, and in particular much larger than $t$, it follows that in $H^\ast$, $x$ and $y$ are roots of two disjoint copies of a $(j+1,b)$-perfect tree $T$, for $1 \leq j \leq t$. Joining these two trees by the edge $\{x, y\}$, the line graph of the resulting graph is two copies of some block graph with $(b+1)$-cliques, coalesced at the vertex $\{x, y\}$. The number of $(b+1)$-cliques is, by virtue of $T$ being a $(j+1,b)$-perfect tree, $$2 \left(\sum_{i=0}^{j-1} b^i\right) = 2\left(\dfrac{b^j - 1}{b-1}\right)$$ and consequently, noting that all the edges in $H$ are coloured $1$, we have that 
\begin{equation}\label{e:ej1h}
e^{H}_{j, 1} [v] = 2\left(\dfrac{b^j - 1}{b-1}\right)\binom{b+1}{2} = \dfrac{(b^{j+1} - b)(b+1)}{b-1} > b^{j+1}
\end{equation} for $1 \leq j \leq t$.

Now, these two graphs $G$ and $H$ can be packed by Theorem \ref{SauerSpencerCatlin} into a graph $Q$ with no overlapping edges, while preserving their edge colourings. By this packing, $Q$ is $r + 2b$ regular, where every vertex has $2b$ incident edges coloured $1$ and $r$ incident edges coloured $2$. We will show that for any vertex $v$, $e_{j, 1} [v] > e_{j, 2} [v]$ in $Q$ for $1 \leq j \leq t$, and hence $Q$ is a $([t], (2b, r))$-flip graph.

We first compute an upper bound for $|N_j [v]|$ for any $v \in V(Q)$, observing that as $Q$ is $r+2b$ regular, and $G^\ast$ and $H^\ast$ are connected and can be arbitrarily large, it follows that $N_{j-1}[v]$ is strictly contained in $N_j [v]$ for $2 \leq j \leq t$. Firstly observe that for an $s$-regular graph we have that 
$$
	|N_j [v]| \leq 1 + s\sum_{i=0}^{j-1}(s-1)^i < 2s^j\,.
$$
Since $Q$ is $r + 2b$ regular and $r \leq (q+1)b$, by the inequality it holds for  $1 \leq j \leq t$ that
$$
	|N_j [v]| < 2(r+2b)^j \leq 2((q+3)b)^j\,.
$$
Due to the girth condition on $G$, we have that for $1 \leq j \leq t$, $g(G) > |N_j [v]|$. Hence, the subgraph in $Q$ induced by the edges coloured $2$ in $N_j [v]$ is acyclic, and therefore $$e^Q_{j, 2} [v] < |N_j [v]| < 2((q+3)b)^j\,,$$ while by \eqref{e:ej1h} $$e^Q_{j,1} [v] \geq e^H_{j, 1} [v] \geq b^{j+1}\,.$$

Therefore, for $1 \leq j \leq t$, to flip the majority in the $j^{\textrm{th}}$ neighbourhood of $v$ we require that $2((q+3)b)^j \leq b^{j+1}$, which simplifies to $2(q+3)^j \leq b$, which is the case since $2(q+3)^t \leq b$ and $j \leq t$. Hence $Q$ is a $([t], (2b, r))$-flip graph. 
\end{proof}
\section{Concluding remarks and open problems}

We have provided an in-depth treatment of most of the problems mentioned in the introduction, yet, nonetheless, several open problems remain. In particular, the complexity aspect of flip sequences and flip graphs (Problems 6). We next summarise a few additional open problems.

We have seen a comprehensive treatment of the two-colour case in Section 3.
However, with it still remains open the determination of $h(b, r)$, the smallest order of an $(b,r)$-flip graph.
\begin{problem*}
Determine $h(b,r)$ or at least obtain a nontrivial lower bound.
\end{problem*}
For three or more colours, Problem 2 remains entirely open.

Regarding three colors, in Theorem \ref{3colour_bound} we have seen a necessary condition for a sequence $(a_1, a_2, a_3)$ to be a $3$-flip sequence. In view of this theorem, it is of interest to find constructions of $3$-flip sequences $(a_1, a_2, a_3)$ with as large as possible a constant $c$ such that $a_3 = c (a_1)^2$, where we have seen that $\frac{1}{2} \leq c \leq 2$. 

\begin{problem*}
Determine the supremum over all constants $c$ such that there exist infinitely many $3$-flip sequences $(a_1, a_2, a_3)$ satisfying $a_3 \ge c(a_1)^2$.
\end{problem*}

Whilst for two and three colours we have a necessary condition for flip sequences, namely that the largest colour-degree is at most quadratic in the smallest colour-degree, we have proved in Theorem \ref{arbitraryGapsThm} that there is no such condition for $k \geq 4$ colours.
In fact, we have shown that there exists some $m(k) \in \mathbb{N}$ such that given any $N \in \mathbb{N}$, there is a $k$-flip sequence $(a_1, \dots, a_k)$ where $a_1 = m(k)$ and $a_k > N$. Explicitly from our construction we establish a linear upper bound on $m(k)$.

\begin{problem*}
Let $k \in \mathbb{N}, k \geq 4$. What is the minimum value of $m(k) \in \mathbb{N}$ such that for all $N \in \mathbb{N}$, there is a $k$-flip sequence $(m(k), a_2, \dots, a_k)$ where $a_k > N$?
\end{problem*}

We have explored the relationship between the smallest and largest colour-degrees, however for four colours and higher, in light of Theorem \ref{arbitraryGapsThm}, it is worth exploring the relationship between the other intermediate colour-degrees and the largest colour-degree. We pose the following problem. 

\begin{problem*}
For $k \ge 4$, what is the largest integer $q(k)$, $q(k) < k$, such that there exists some $h(k) \in \mathbb{N}$ and for all $N \in \mathbb{N}$, there is a $k$-flip sequence $(a_1, \dots,  a_k)$ where $a_{q(k)} = h(k)$ and $a_k > N$?
\end{problem*}
\noindent Our work in this paper establishes that $q(k) \ge 1$ for all $k \geq 4$.

Given the demonstrated usefulness of $(r, c)$-constant graphs, it is of interest to advance our knowledge concerning $\mathsf{spec}(r)$.

\begin{problem*}
	Determine $\mathsf{spec}(r)$, or at least improve upon the bounds given in Theorem \ref{RC_Const_Exists} for membership and non-membership in $\mathsf{spec}(r)$.
\end{problem*}  

We note that the graphs constructed in the proof of Theorem \ref{intervalFlippingThm} are substantially large, as are the graphs constructed to demonstrate the existence of $(r, c)$-constant graphs. It is therefore of interest to find smaller $(r,c)$-constant graphs, especially in light of Problem 2. 

\begin{problem*}
	Find lower and upper bounds to $$g(r, c) = \min\left\{|V(G)| \colon G \ \mathrm{is} \ \mathrm{an} \ (r,c)\mathrm{-constant} \ \mathrm{graph}\right\}\,.$$
\end{problem*}

Lastly, Section 6 dealt with the extension of the flip problem to the $t^\mathrm{th}$ neighbourhood. In particular we illustrated two distinct constructions for the case of two colours. Problems 1-5 naturally extend to this generalisation and remain to be studied. In particular, define $b(t) = \min\left\{ b \colon ([t], (b, r))\mathrm{-flip} \ \mathrm{graph} \ \mathrm{exists}\right\}$. Observe that for $t \geq 2$,
the proof of Theorem \ref{CayleyTFlip} gives $b(t) \leq 2^{t+1} - 1$.

\begin{problem*}
For $t \in \mathbb{N}, t \geq 2$ determine $b(t)$.
\end{problem*}

\bibliographystyle{plain}
\bibliography{flip_graphs_bibliography}

\end{document}